\documentclass[11pt,reqno]{amsart}

\usepackage{amsfonts, amsthm, amsmath}
\allowdisplaybreaks[4]

\usepackage{rotating}

\usepackage{tikz}

\usetikzlibrary{decorations.markings} %for arrow in the middle

\usepackage{graphics}

\usepackage{amssymb}

\usepackage{amscd}

\usepackage[latin2]{inputenc}

\usepackage{t1enc}

\usepackage[mathscr]{eucal}

\usepackage{indentfirst}

\usepackage{graphicx}

\usepackage{graphics}

\usepackage{pict2e}

\usepackage{mathrsfs}

\usepackage{enumerate}
\usepackage{url}
\usepackage[pagebackref]{hyperref}
\hypersetup{colorlinks=true}
%%%%%%%%%%%%%%%%%%
\usepackage{cite}
\usepackage{color}
\usepackage{epic}
\usepackage{hyperref} %this gives clickable references, which is nice
% and is helpful in editing.
%\usepackage{pdfsync} %temporary - helpful in editing (with TeXShop)
%\usepackage{showkeys} %temporary - helpful in editing
%%It works very well with my version of TeX (TeXShop) but
%%if it causes you any problems, you can delete it. We can delete it after finished.
\usepackage{framed}
\usepackage{mathabx}
%for widebar
\usepackage{scalerel}
%for smaller subscript
\usepackage{booktabs}
\usepackage{makecell}
\newcolumntype{V}{!{\vrule width 2pt}}
%for better looking tables

\numberwithin{equation}{section}
\topmargin 0.8in
\textheight=8.2in
\textwidth=6.4in
\voffset=-.68in
\hoffset=-.68in

\theoremstyle{plain}

\newtheorem{theorem}{Theorem}[section]

\newtheorem{lemma}[theorem]{Lemma}

\theoremstyle{definition}

\newtheorem{Def}[theorem]{Definition}

\newtheorem{example}[theorem]{Example}

\newtheorem{remark}{Remark}

\newtheorem{?}[theorem]{Problem}

%%% Notations

\newcommand{\R}{\mathbb{R}}

%%% convenient short-hand

\begin{document}

\title{Polynomization of the Liu-Zhang inequality for overpartition function }
\author[X. Li]{Xixi Li}
\address[Xixi Li]{College of Mathematics and Statistics, Chongqing University, Huxi campus, Chongqing 401331, P.R. China}
\email{2472005992@qq.com}
\date{\today}

\begin{abstract}
Let $\overline{p}(n)$ denote the overpartition function. Liu and Zhang showed that $\overline{p}(a) \overline{p}(b)>\overline{p}(a+b)$ for all integers $a,b>1$ by using an analytic result of Engle. We offer in this paper a combinatorial proof to the Liu-Zhang inequaity. More precisely, motivated by the polynomials $P_{n}(x)$ , which generalize the $k$-colored partitions function $p_{-k}(n)$, we introduce the polynomials $\overline{P}_{n}(x)$, which take the number of $k$-colored overpartitions of $n$ as their special values. And by combining combinatorial and analytic approaches, we obtain that
$\overline{P}_{a}(x) \overline{P}_{b}(x)>\overline{P}_{a+b}(x)$ for all positive integers $a,b$ and real numbers $x \ge 1$ , except for $(a,b,x)=(1,1,1),(2,1,1),(1,2,1)$.
\end{abstract}

\maketitle
\section{introduction}
A partition \cite{And} of a positive integer $n$ is a non-increasing sequence of positive numbers whose sum is $n$. Let $p(n)$ denote the number of partitions of $n$. Bessenrodt and Ono \cite{Ala} obtained that
\[ p(a) p(b)>p(a+b)\]
holds for $a,b>1$ and $a+b>9$.\par
The work of Bessenrodt and Ono promoted further research and new results in several directions. Shortly after their paper was published, Chern, Fu, and Tang \cite{Che} generalized Bessenrodt and Ono's theorem to $k$-colored partitions function.
A partition is called a $k$-colored partition if each part can appear as $k$ colors. Let $p_{-k}(n)$ denote the number of $k$-colored partitions of $n$. The generating function of $p_{-k}(n)$ is given by
\[\sum_{n=0}^{\infty} p_{-k}(n) q^{n}=\frac{1}{\prod_{n=1}^{\infty}\left(1-q^{n}\right)^{k}}=\frac{1}{(q ; q)_{\infty}^{k}},\]
where$(a ; q)_{\infty}=\prod_{n=0}^{\infty}\left(1-aq^{n}\right)$ is the $q$-Pochhammer symbol.
\begin{theorem}[Chern, Fu, Tang]
If $a,b,k$ are positive integers. Let $k>1$, then
\[ p_{-k}(a) p_{-k}(b)>p_{-k}(a+b),\]
except for $(a,b,k)={(1,1,2),(1,2,2),(2,1,2),(1,3,2),(3,1,2),(1,1,3)}.$
\end{theorem}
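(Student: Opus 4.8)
The plan is to run the ``telescoping of consecutive ratios'' argument that underlies every Bessenrodt--Ono-type result, with the log-concavity of $p_{-k}$ supplying the only genuine analytic input. Assume without loss of generality $a\le b$, and set $r_k(n):=p_{-k}(n+1)/p_{-k}(n)>0$. Since $p_{-k}(0)=1$, one has $p_{-k}(a)=\prod_{i=0}^{a-1}r_k(i)$ and $p_{-k}(a+b)/p_{-k}(b)=\prod_{i=0}^{a-1}r_k(b+i)$, so the assertion $p_{-k}(a)p_{-k}(b)>p_{-k}(a+b)$ is equivalent to
\[
\prod_{i=0}^{a-1}r_k(i)\;>\;\prod_{i=0}^{a-1}r_k(b+i),
\]
a comparison of two $a$-fold products with matched indices shifted by $b\ge 1$. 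Everything therefore reduces to controlling the monotonicity of $r_k$ on $\{0,1,\dots,a+b-1\}$.

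The analytic core is a log-concavity statement for $p_{-k}$. From the logarithmic-derivative recursion $n\,p_{-k}(n)=k\sum_{j=1}^{n}\sigma(j)\,p_{-k}(n-j)$ one computes $r_k(0)=k$ and $r_k(1)=\tfrac12(k+3)$, so $r_k(0)>r_k(1)$ exactly when $k\ge 4$, with equality at $k=3$ and reversal at $k=2$ --- this is already the seed of the exceptional triples. I would establish: (i) for every $k\ge 3$ the sequence $(p_{-k}(n))_n$ is log-concave for all $n\ge 1$, strictly so apart from the single equality $p_{-3}(1)^2=p_{-3}(0)p_{-3}(2)=9$ (equivalently $r_3$ is non-increasing on $\N$ and $r_3(0)=r_3(1)=3$); and (ii) for $k=2$ the one-step ratio is \emph{not} monotone near the start, but the two-step ratio $R_2(n):=p_{-2}(n+2)/p_{-2}(n)$ is non-increasing for all $n\ge 0$, with $R_2(0)=R_2(1)=5$. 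Both are of DeSalvo--Pak/Nicolas type: combine the recursion above with the Meinardus asymptotic $p_{-k}(n)\sim c_k\,n^{-(k+3)/4}\exp(\pi\sqrt{2kn/3})$ to get explicit two-sided bounds on $p_{-k}(n)$, compare against the log-concave main term, and clear the finitely many remaining $n$ directly. The delicate point is that the log-concavity threshold must be explicit \emph{and} bounded uniformly in $k$; this should hold precisely because a larger exponential rate $\pi\sqrt{2k/3}$ forces the main term to dominate sooner, making $k\ge 2$ softer than the classical $k=1$ case.

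Granting (i)--(ii), the endgame splits by $k$. If $k\ge 4$, then $r_k$ is non-increasing on $\N$, so each right-hand factor in the display is $\le$ its partner and the factor at $i=0$ is strictly smaller ($r_k(b)\le r_k(1)<r_k(0)$ for $b\ge 1$); hence the inequality holds for \emph{all} $a,b\ge 1$. If $k=3$, $r_3$ is still non-increasing but $r_3(0)=r_3(1)=3>r_3(n)$ for $n\ge 2$, so the comparison is strict unless every factor matches, which forces $a=b=1$; the unique failure is the equality $p_{-3}(1)^2=p_{-3}(2)$, i.e.\ $(a,b,k)=(1,1,3)$. If $k=2$, first handle $\min(a,b)=1$: with $a=1$ the claim is $r_2(b)<2$, valid precisely for $b\ge 4$, which produces the exceptions $(1,1,2),(1,2,2),(2,1,2),(1,3,2),(3,1,2)$ (the first a reversed inequality, the next two equalities). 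For $\min(a,b)\ge 2$, rewrite each $a$-fold product by pairing consecutive factors into blocks $r_2(2l)r_2(2l{+}1)=R_2(2l)$ and $r_2(2l{+}1)r_2(2l{+}2)=R_2(2l{+}1)$, and apply (ii): since $b\ge 2$, every $R_2$-block on the right is dominated by the like-parity block on the left (the leftover single factor, present when $a$ is odd, is absorbed by $r_2(b+2j)\le 2=r_2(0)$), and the first block is strict because $R_2(b)\le R_2(2)=4<5$; hence the inequality holds for all $a,b\ge 2$.

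The main obstacle is plainly the log-concavity input (i)--(ii): producing an \emph{explicit}, $k$-uniform threshold past which $p_{-k}$ (respectively each of its two parity subsequences, when $k=2$) is log-concave, so that only a bounded amount of finite verification remains. Everything else is bookkeeping over the three small regimes, in which the exceptional list is exactly the set of $(a,b,k)$ where the matched-product comparison fails to be strict. As a more combinatorial alternative, in the spirit of the present paper, one could seek a direct injection from $k$-colored partitions of $a+b$ into pairs consisting of a $k$-colored partition of $a$ and one of $b$, together with a distinguished element of the target outside the image to secure strictness; but engineering such a map uniformly in $a\le b$ and recovering exactly the six exceptions appears no easier than the route above.
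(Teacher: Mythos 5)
This statement is quoted in the paper as background (it is Theorem~1.1, attributed to Chern, Fu and Tang and cited from their article); the paper contains no proof of it, so there is no internal argument to compare yours against. Judged on its own terms, your reduction to the matched ratio comparison $\prod_{i=0}^{a-1}r_k(i)>\prod_{i=0}^{a-1}r_k(b+i)$ is sound, and your bookkeeping is accurate: $r_k(0)=k$, $r_k(1)=(k+3)/2$, the equality $p_{-3}(1)^2=p_{-3}(2)=9$, and the $k=2$ exceptions coming from $r_2(1)=5/2$ and $r_2(2)=r_2(3)=2$ all check out, so the three regimes $k\ge 4$, $k=3$, $k=2$ would indeed recover exactly the stated exceptional set \emph{if} your monotonicity inputs were available.

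The genuine gap is that the entire analytic burden sits in assertions (i) and (ii), which you state but do not prove, and these are not routine. Claim (i) --- log-concavity of $p_{-k}(n)$ from $n=1$ onward, uniformly for all $k\ge 3$ --- is a strong statement: for $k=1$ log-concavity of $p(n)$ fails for small odd $n$ and only sets in at $n=26$ (DeSalvo--Pak), so ``a larger exponential rate forces the main term to dominate sooner'' is a heuristic, not an argument. Meinardus' theorem carries no explicit error constants, so your route requires redoing the circle-method analysis for the weight $-k/2$ form with explicit, $k$-uniform error bounds; the error control in fact degrades as $k$ grows (the principal part grows with $k$), so the claimed uniformly bounded threshold is precisely the hard point, and without it the ``finitely many remaining $n$'' cannot even be enumerated. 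Claim (ii) has the same status, and your odd-$a$ absorption step for $k=2$ additionally invokes $r_2(n)\le 2$ for all $n\ge 2$, which does not follow from two-step log-concavity (that only gives $r_2(n)r_2(n+1)\le 4$, hence a bound on one of the two factors) and is itself equivalent to an infinite family of $a=1$ instances of the theorem; it needs an independent proof beyond the finite check you describe. For contrast, the published Chern--Fu--Tang proof, like the overpartition argument in the present paper, deliberately avoids log-concavity: it combines a crude explicit two-sided bound on the counting function with an induction on $a+b$ driven by combinatorial lemmas that strip off the smallest parts, which demands far less analytic precision and is why it closes.
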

Similarly, the Bessenrodt and Ono's theorem was also extended to the overpartition function by Liu and Zhang. Recall an overpartition\cite{Co} of a nonnegative integer $n$ is a partition of $n$ where the last occurrence of each distinct part may be overlined. Let $\overline{p}(n)$ denote the number of overpartitions of $n$.
\begin{theorem}[Liu, Zhang]\label{th0}
If $a,b$ are integers with $a,b>1$, then
\[\overline{p}(a) \overline{p}(b)>\overline{p}(a+b).\]
\end{theorem}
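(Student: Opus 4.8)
The plan is to deduce Theorem~\ref{th0} from a one-variable refinement, in the spirit of the polynomials $P_n(x)$ attached to $k$-colored partitions. I would introduce polynomials $\overline{P}_n(x)$ by
\[
\sum_{n=0}^{\infty}\overline{P}_n(x)\,q^n \;=\; \prod_{m=1}^{\infty}\left(\frac{1+q^m}{1-q^m}\right)^{x} \;=\; \left(\frac{(-q;q)_\infty}{(q;q)_\infty}\right)^{x}.
\]
Since $\log\prod_{m\ge1}\big((1+q^m)/(1-q^m)\big)^x = x\sum_{m\ge1}c_m q^m$ with $c_m = 2\sum_{\substack{d\mid m\\ d\ \mathrm{odd}}}\frac1d>0$, the coefficient $\overline{P}_n(x)=[q^n]\exp\!\big(x\sum_{m\ge1}c_mq^m\big)$ is a polynomial in $x$ of degree $n$ with nonnegative coefficients, with no constant term for $n\ge1$, and with leading coefficient $2^n/n!$. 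Moreover $\overline{P}_n(1)=\overline{p}(n)$, and for every positive integer $k$ the value $\overline{P}_n(k)$ counts the $k$-colored overpartitions of $n$. Because Theorem~\ref{th0} concerns only $a,b\ge2$ (so $a+b\ge4$, avoiding the excluded triples $(1,1,1),(2,1,1),(1,2,1)$, which all have $a+b\le3$), it suffices to prove the stronger statement
\[
\overline{P}_a(x)\,\overline{P}_b(x)\;>\;\overline{P}_{a+b}(x)\qquad\text{for all real }x\ge1,
\]
and then set $x=1$.

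The combinatorial heart is the case $x=k\in\{1,2,3,\dots\}$. I would prove $\overline{P}_a(k)\overline{P}_b(k)\ge\overline{P}_{a+b}(k)$, with equality only in the listed small cases, by constructing an explicit injection from the set of $k$-colored overpartitions of $a+b$ into the set of pairs, each consisting of a $k$-colored overpartition of $a$ and one of $b$. This adapts the Bessenrodt--Ono splitting map and its $k$-colored refinement by Chern--Fu--Tang: given a $k$-colored overpartition of $a+b$, run through its (colored, possibly overlined) parts in a fixed order accumulating two blocks with target sums $a$ and $b$, and resolve the single possible overshoot by splitting one part; the extra freedom afforded by the $k$ colors and by the overlining option makes the map well defined and its failure to be onto visible once $a+b$ is not too small, with the exceptional triples checked by hand. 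In particular, at $k=1$ this already yields a purely combinatorial proof of the Liu--Zhang inequality.

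To upgrade this to all real $x\ge1$ I would bring in the analytic side. Logarithmic differentiation of the generating function gives the recursion
\[
n\,\overline{P}_n(x)\;=\;x\sum_{j=1}^{n}\overline{\sigma}(j)\,\overline{P}_{n-j}(x),\qquad \overline{\sigma}(j)=2\!\!\sum_{\substack{d\mid j\\ j/d\ \mathrm{odd}}}\!\! d\,.
\]
From it one extracts monotonicity and comparison bounds for the ratios $\overline{P}_n(x)/\overline{P}_{n-1}(x)$ as functions of $x$, and feeds them into an induction on $a+b$ to show that $D(x):=\overline{P}_a(x)\overline{P}_b(x)-\overline{P}_{a+b}(x)$ stays positive on $[1,\infty)$. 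The two ends are comparatively easy: as $x\to\infty$ the sign of $D$ is governed by its leading coefficient $2^{a+b}\big(\frac1{a!\,b!}-\frac1{(a+b)!}\big)>0$, and the values $D(k)$ at the integers are controlled by the combinatorial step; the work is to rule out sign changes of $D$ on the bounded part of $[1,\infty)$.

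I expect this last point to be the main obstacle. The polynomial $D(x)$ does have negative coefficients in low degree — already for $a=b=2$ one finds $\overline{P}_2(x)=2x^2+2x$ and $\overline{P}_4(x)=\frac23x^4+4x^3+\frac{22}{3}x^2+2x$, hence $D(x)=\frac{10}{3}x^4+4x^3-\frac{10}{3}x^2-2x$ — so positivity on $[1,\infty)$ cannot be read off from the coefficients and must be obtained from genuine real-analytic control, e.g.\ by showing $D(x)/x$ is increasing on $[1,\infty)$, or by bounding $\overline{P}_{a+b}(x)$ above by $\overline{P}_a(x)\overline{P}_b(x)$ term-by-term through the recursion. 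Making such estimates uniform in $a$ and $b$ while still isolating the finitely many genuinely small $(a,b)$ for a direct check is the delicate part of the argument.
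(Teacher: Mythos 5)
Your overall architecture inverts the paper's and, for this particular theorem, buries the only step that matters. The paper proves Theorem \ref{th0} directly and combinatorially (as Theorem \ref{th1}), and only afterwards uses that $x=1$ result as an ingredient in the proof of the polynomial inequality of Theorem \ref{th4}; so ``prove the statement for all real $x\ge 1$ and then set $x=1$'' cannot be the route unless the $x=1$ (or integer $k$) case is established independently first. You do acknowledge this by placing a combinatorial injection at integer $k$ at the heart of the argument, and at $k=1$ that injection alone would prove the theorem, which makes the entire real-$x$ superstructure (the recursion for $\overline{P}_n'(x)$, the ratio bounds, the leading-coefficient asymptotics) dead weight for Theorem \ref{th0}. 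Your computations of $\overline{P}_2(x)$, $\overline{P}_4(x)$ and of $\overline{\sigma}$ are all correct, but they are not where the content is.

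The genuine gap is in the one sentence you devote to the injection. A single accumulate-and-split map from the full set $\overline{P}(a+b)$ into $\overline{P}(a)\oplus\overline{P}(b)$, as you describe it, is not injective. Concretely, with $a=b=2$: the overpartition $(4)$ splits as $y=2$, $x=2$ and is sent to $((2);(1,1))$, while $(3,1)$ has $i=1$, $y=2$, $x=1$ and is sent to $((2);(1,1))$ as well --- a collision caused exactly by the parts equal to $1$ that the split manufactures. This is why the paper (following Alanazi--Gagola--Munagi and Chern--Fu--Tang, the very sources you invoke) proves the splitting lemma only between \emph{restricted} sets, namely $\overline{P}\left(a+b \mid \text{no } 1\text{'s and no } 2\text{'s}\right)\rightarrow\overline{P}\left(a \mid \text{no } 1\text{'s}\right)\oplus\overline{P}\left(b \mid \text{no } 2\text{'s}\right)$ (Lemma \ref{le11}), handles $b=1$ and $b=2$ by separate ad hoc maps (Lemmas \ref{le2} and \ref{le13}), combines these by an induction on $a+b$ to get $\overline{p}\left(a \mid \text{no } 1\text{'s}\right)\overline{p}(b)>\overline{p}\left(a+b \mid \text{no } 1\text{'s}\right)$, and only then deduces the theorem by a second induction that strips off the parts equal to $1$. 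Even within the restricted lemma the overlines force real work: when the split leaves $y=1$ and $\lambda_i$ is non-overlined, one cannot place a part of size $1$ on the $a$-side at all and must instead split the \emph{previous} part $\lambda_{i-1}$ into $\left\lceil\frac{\lambda_{i-1}+1}{2}\right\rceil$ and $\left\lfloor\frac{\lambda_{i-1}+1}{2}\right\rfloor$ while tracking its overline, and injectivity then requires comparing this case against all the others. None of this is supplied by ``the extra freedom afforded by the overlining option makes the map well defined,'' so as written the proposal does not contain a proof.
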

The proof depends on a result of Engle\cite{En} on the bound of the overpartition function. In this paper, we will provide a combinatorial proof to the Liu-Zhang inequality for overpartition function, which closely follows the proof of the corresponding theorem for $p(n)$ in \cite{Ala}, and the proof method gives an extension of Theorem \ref{th0}. Finally, we obtain the following theorem.
\begin{theorem}\label{th1}
If $a,b$ are positive integers with $a\ge b$, then
\begin{align}\label{ie1}
\overline{p}(a) \overline{p}(b)>\overline{p}(a+b),
\end{align}
except for $(a,b)=(1,1),(2,1).$
\end{theorem}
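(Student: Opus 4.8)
\medskip
\noindent\textbf{Proof strategy.}
Since \eqref{ie1} is symmetric in $a$ and $b$, I keep the hypothesis $a\ge b\ge 1$. After tabulating $\overline{p}(n)$ from $\sum_{n\ge 0}\overline{p}(n)q^{n}=\frac{(-q;q)_{\infty}}{(q;q)_{\infty}}$ and checking by hand the pairs with $a+b$ small --- this is where the two exceptions surface, since $\overline{p}(1)^{2}=\overline{p}(2)=4$ and $\overline{p}(2)\overline{p}(1)=\overline{p}(3)=8$ --- observe that the pairs with $b\ge 2$ are exactly the content of the Liu--Zhang Theorem \ref{th0}. So the only genuinely new instances are $b=1$, where it remains to prove $\overline{p}(a+1)<2\,\overline{p}(a)$ for every $a\ge 3$.

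To do this I would construct an injection $\Phi$ from the set of overpartitions of $a+1$ into two disjoint copies of the set of overpartitions of $a$, and exhibit a target element missed by $\Phi$. Sort overpartitions of $a+1$ by how the part $1$ occurs. An overpartition containing a non-overlined $1$ goes to the first copy via deletion of one such part; since appending a non-overlined $1$ reverses this, that rule is a bijection onto the whole first copy. An overpartition with no non-overlined $1$ --- hence either carrying a single $\overline{1}$, or having all parts $\ge 2$ --- goes to the second copy: if it carries an $\overline{1}$, delete it (landing among overpartitions of $a$ with no part $1$); if all its parts are $\ge 2$, replace one copy of the smallest part $\lambda_{s}$ by $\lambda_{s}-1$ parts equal to $1$, moving the overline status of $\lambda_{s}$ onto that block (landing injectively among overpartitions of $a$ that do contain a part $1$). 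The two sub-rules have disjoint images, so $\Phi$ is a well-defined injection. Finally the overpartition $\overline{2}+1+\cdots+1$ of $a$ (with $a-2$ copies of $1$), viewed in the second copy, is not in the image of $\Phi$ once $a\ge 3$: it contains a $1$, so the $\overline{1}$-rule cannot reach it, and a short look at the inverse of the ``split into $1$'s'' rule shows that, writing $c$ for the number of $1$'s in a target, the image of that rule contains no target having a part whose value lies strictly between $1$ and $c+1$, and no target whose parts of value $c+1$ include an overlined one --- and $\overline{2}+1+\cdots+1$ is of this excluded form for every $a\ge 3$. Hence $\overline{p}(a+1)\le 2\,\overline{p}(a)-1$, as needed.

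For a proof of \eqref{ie1} that does not invoke Theorem \ref{th0} --- the route the paper announces --- one would instead reprove the inequality uniformly for all $a\ge b\ge 1$ with $a+b$ large by the Bessenrodt--Ono method \cite{Ala}: given an overpartition of $a+b$, peel off its smallest parts until their sum first reaches $b$, split the part straddling that threshold into two pieces, and read off an overpartition of $b$ together with an overpartition of $a$; then prove this map injective and not surjective.

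The crux in either route is the one ingredient absent from the plain-partition case \cite{Ala}: transporting overlines through the splitting step without destroying injectivity. In the $b=1$ route this is precisely the bookkeeping that makes the ``split into $1$'s'' rule reversible and pins down which overpartitions of $a$ are missed --- which is what upgrades $\overline{p}(a+1)\le 2\,\overline{p}(a)$ to the strict inequality for every $a\ge 3$; in the \cite{Ala} route it is the extra case analysis needed to recover the two halves of the split part and thereby confirm that the exceptional set is exactly $\{(1,1),(2,1)\}$ and no larger. With this in place, the remaining small-$a+b$ base cases are routine once $\overline{p}(n)$ has been tabulated.
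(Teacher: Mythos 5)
Your argument is correct, but it takes a genuinely different route from the paper. You split the statement into the cases $b\ge 2$, which you dispose of by citing the Liu--Zhang Theorem \ref{th0}, and $b=1$, where you build a new injection giving $\overline{p}(a+1)\le 2\,\overline{p}(a)-1$ for $a\ge 3$. That injection is sound: the splitting rule is reversible once you fix the convention that when the smallest part occurs both overlined and non-overlined it is the overlined copy that gets split (this is exactly what your claim that no overlined part of value $c+1$ survives in the image presupposes), and the element $\overline{2}+1+\cdots+1$ is indeed missed for every $a\ge 3$. The paper, by contrast, never invokes Theorem \ref{th0}: its entire point is a self-contained combinatorial proof, and it runs an induction on $a+b$ through the restricted counts of overpartitions with no parts equal to $1$ (resp.\ no parts equal to $1$ or $2$), using the injection lemmas (Lemmas \ref{le11}, \ref{le2}, \ref{le13}) to peel off parts of size $1$ and $2$ and to locate missed pairs such as $\left(\cdots,\overline{2},\overline{1};\overline{1}\right)$. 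Your route is shorter and its $b=1$ piece is a clean self-contained contribution (structurally close in spirit to the paper's Lemma \ref{le2}), but it purchases all the cases $b\ge 2$ from a theorem whose known proof rests on Engel's analytic bound, so it does not deliver the combinatorial proof the paper is after; the paper's route costs several case-heavy injections but stays elementary throughout and is the template later reused for the $k$-colored statement (Theorem \ref{th5}). Your closing sketch of the Bessenrodt--Ono-style splitting map is precisely the paper's actual strategy, but as written it remains a sketch: the bookkeeping for transporting overlines through the split, which is the substantive content of Lemma \ref{le11}, is acknowledged but not carried out.
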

\begin{remark}
For the two-tuples $(a,b)=(1,1),(2,1)$, we have equality in (\ref{ie1}).
\end{remark}
Heim and Neuhauser\cite{Hel} generalized the Chern-Fu-Tang Theorem to D'Arcais polynomials, also known as Nekrasov-Okounkov polynomials\cite{Han,Neu,Hel,Ne}. Let 
\[P_{n}(x):=\frac{x}{n}\sum_{k=1}^{n} \sigma(k) P_{n-k}(x),\]
with $\sigma(k):=\sum_{d \mid k} d$ is the divisor sum, and the initial condition $P_{0}(x):=1$. And 
\[\sum_{n=0}^{\infty} P_{n}(x) q^{n} = \prod_{n=1}^{\infty}\left(1-q^{n}\right)^{-x}, \text{ where } \lvert q \rvert <1,\] then $P_{n}(k)=p_{-k}(n)$ for all positive integers $k$, and $p(n)=p_{-1}(n)=P_{n}(1).$
\begin{theorem}[Heim, Neuhauser]
Let $a,b$ be positive integers with $a+b>2$ and $x>2$. Then 
\[P_{a}(x) P_{b}(x)>P_{a+b}(x).\]
The case $x=2$ is true for $a+b>4.$
\end{theorem}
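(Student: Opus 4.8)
The statement is the polynomization of the Chern--Fu--Tang theorem, so I would mirror that proof, carrying a real parameter $x\ge2$ through it and using the recursion for $P_n$ as the only structural input (there is no combinatorial model when $x$ is not an integer). Assume without loss of generality $a\ge b$. From $\sum_{n\ge0}P_n(x)q^n=\exp\!\big(x\sum_{k\ge1}\tfrac{\sigma(k)}{k}q^{k}\big)$ one reads off, for $n\ge1$, that $P_n(x)=\sum_{j=1}^{n}p_{n,j}x^{j}$ with all $p_{n,j}\ge0$, $p_{n,1}=\sigma(n)/n$ and $p_{n,n}=1/n!$; hence each $P_n$ is positive and strictly increasing on $(0,\infty)$, and $P_n(2)=p_{-2}(n)$. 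The base cases $a+b\le4$ I would settle by hand from the factorizations $P_1(x)=x$, $P_2(x)=\tfrac12x(x+3)$, $P_3(x)=\tfrac16x(x+1)(x+8)$, $P_4(x)=\tfrac1{24}x(x+1)(x+3)(x+14)$, which give
\[
P_1^{2}-P_2=\tfrac12x(x-3),\qquad P_2P_1-P_3=\tfrac13x(x-2)(x+2),\qquad P_2^{2}-P_4=\tfrac1{24}x(x+2)(x+3)(5x-7),\qquad P_3P_1-P_4=\tfrac18x(x-2)(x+1)(x+7);
\]
one reads off the sign on $(2,\infty)$ and at $x=2$, and this pins down precisely the exceptional behaviour, namely $a+b=2$ (where $P_1^{2}-P_2<0$ on $(2,3)$) and $x=2$ with $a+b\in\{3,4\}$ (where $P_2P_1-P_3$ and $P_3P_1-P_4$ vanish).

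For $n=a+b\ge5$ I would use strong induction on $n$. Suppose first $b\ge2$. In the recursion $(a+b)P_{a+b}(x)=x\sum_{k=1}^{a+b}\sigma(k)P_{a+b-k}(x)$, bound each term with $k\le a$ by $P_{a+b-k}=P_{(a-k)+b}\le P_{a-k}(x)P_b(x)$ (the inductive hypothesis, which holds with ``$\ge$'' for $x\ge2$ even in the smallest instances — the problematic base value $P_1^{2}-P_2$ only occurs when $b=1$), and then substitute the recursions for $P_a$ and for $P_b$; this yields
\[
(a+b)\big(P_a(x)P_b(x)-P_{a+b}(x)\big)\ \ge\ x\sum_{l=1}^{b}P_{b-l}(x)\big(\sigma(l)P_a(x)-\sigma(a+l)\big),
\]
so the step is finished once the bracket $\sigma(l)P_a(x)-\sigma(a+l)$ is positive for every $1\le l\le b$, for which it suffices that $P_a(x)>\max_{1\le l\le b}\sigma(a+l)$. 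As $P_a(x)\ge p_{-2}(a)$ grows like $\exp\!\big(2\pi\sqrt{a/3}\big)$ (Meinardus) while $\max_{1\le l\le b}\sigma(a+l)=O(a\log\log a)$ (using $b\le a$), this holds for all $a$ past an explicit bound $a_0$; the finitely many pairs with $2\le b\le a<a_0$ are then checked directly, each $P_aP_b-P_{a+b}$ being an explicit polynomial one proves positive on $[2,\infty)$ (e.g.\ by re-expanding it in powers of $x-2$ and observing nonnegative coefficients, or by locating its real roots, with the Chern--Fu--Tang theorem supplying positivity at integer $x$ as a check). When $b=1$ one has instead the exact identity
\[
(a+1)\big(xP_a(x)-P_{a+1}(x)\big)=x\sum_{l=1}^{a}P_{a-l}(x)\big(x\sigma(l)-\sigma(l+1)\big),
\]
and the task is to show its right-hand side is $>0$ for $x>2$ (and $\ge0$ at $x=2$, with equality only for $a\le3$): for $x\ge2$ the only negative summands sit at the sparse $l$ with $\sigma(l+1)>x\sigma(l)$, and these are mild and are absorbed by the neighbouring positive terms ($l=2,3,4$ alone contribute $(3x-4)+(4x-7)+(7x-6)\ge11$, while on the relevant range $x\in[2,3]$ the ratios $P_{a-l}(x)/P_{a-l'}(x)$ stay bounded), with small $a$ again handled directly.

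The main obstacle is precisely this cancellation-controlled positivity in the two displayed inequalities of the previous paragraph: one must fix workable explicit thresholds (the bound $a_0$ and the exact ``bad'' set of $l$), verify the remaining finitely many small pairs $(a,b)$ \emph{uniformly} over real $x\in[2,\infty)$ — crude multiplicative estimates being of no use here because $P_m(x)/P_{m-1}(x)\to1$ — and carry the $x=2$ equality cases ($a+b\in\{2,3,4\}$) correctly through the induction. In particular, at $x=2$ the sub-bound $P_{a-k}(x)P_b(x)\ge P_{a+b-k}(x)$ may degenerate to an equality in the smallest instances, so the final strict inequality must come from the strictly positive summands in the displays above, which requires some care with the strict/non-strict bookkeeping; it is also convenient to isolate first the monotonicity $P_m(x)>P_{m-1}(x)$ for $x\ge2$ and $m\ge1$ as a separate lemma.
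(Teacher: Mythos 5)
First, a point of orientation: the paper does not prove this theorem. It is quoted from Heim--Neuhauser--Tr\"oger \cite{Hel} as background, and the nearest thing to a proof in this paper is the argument for the overpartition analogue, Theorem \ref{th4}, which adapts the Heim--Neuhauser method. That method is derivative-based: one shows $\frac{\mathrm{d}}{\mathrm{d}x}(P_a(x)P_b(x))>P_{a+b}'(x)$ using $P_n'(x)=\sum_{k=1}^{n}\frac{\sigma(k)}{k}P_{n-k}(x)$, performs the same index-splitting you do, and is left with the comparison $P_{a+b-k}(x)>(1+\ln(2a))P_{b-k}(x)$ via $\sigma(m)\le m(1+\ln m)$; positivity is anchored at the integer point $x=2$ by Chern--Fu--Tang together with the exponential lower bound for $p_{-2}(n)$, and propagated to all $x\ge 2$ by a lemma of the type of Lemma \ref{le4}. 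Your route is genuinely different: you never differentiate, working instead with the recursion $nP_n(x)=x\sum_{k=1}^{n}\sigma(k)P_{n-k}(x)$. Your algebra checks out --- the base-case factorizations are correct, the inequality $(a+b)\bigl(P_aP_b-P_{a+b}\bigr)\ge x\sum_{l=1}^{b}P_{b-l}\bigl(\sigma(l)P_a-\sigma(a+l)\bigr)$ for $b\ge 2$ is valid, and the $b=1$ identity is exact --- and for $b\ge 2$ the reduction to $\sigma(l)P_a(x)>\sigma(a+l)$ closes easily (the threshold $a_0$ is in fact tiny), arguably more cleanly than the derivative route.

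The genuine gap is the $b=1$ branch, and you have correctly located it but not closed it. Your identity reduces everything to $\sum_{l=1}^{a}P_{a-l}(x)\bigl(x\sigma(l)-\sigma(l+1)\bigr)>0$ for $x>2$. Here the \emph{largest} weight, $P_{a-1}(x)$, carries the coefficient $x-3$, which is negative on all of $(2,3)$; and the negative coefficients are not confined to $l=1$ (at $x=2$ one has $2\sigma(11)-\sigma(12)=24-28<0$, $2\sigma(23)-\sigma(24)=48-60<0$, and such $l$ recur indefinitely). Your justification --- that $l=2,3,4$ ``alone contribute $(3x-4)+(4x-7)+(7x-6)\ge 11$'' --- adds coefficients without their weights $P_{a-2},P_{a-3},P_{a-4}$, each strictly smaller than $P_{a-1}$, so as written it proves nothing. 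What is actually required is a uniform ratio bound, e.g.\ $P_{a-1}(x)\le C\,P_{a-2}(x)$ on $[2,3]$ with $C$ small enough that $(3-x)C$ is beaten by $(3x-4)$ plus the (weighted, partly negative) tail; this is delicate because $P_m(x)/P_{m-1}(x)$ exceeds $2$ for small $m$ (e.g.\ $P_2(2)/P_1(2)=5/2$, $P_2(3)/P_1(3)=3$) and decreases only slowly. This is exactly the difficulty the derivative-based method is designed to avoid: after differentiating, the $k=b$ term degenerates to $P_{a}(x)-(1+\ln(2a))P_0(x)$, i.e.\ to the trivial analogue of Lemma \ref{le3}. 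So either supply the explicit ratio estimates (and control the infinitely many sporadic negative $l$), or switch to the derivative decomposition for the $b=1$ branch; as it stands the induction step is not complete there.
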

Following the work of Heim and Neuhauser, and relating to the Liu and Zhang's work on the overpartiton function, we recursively define a family of polynomials.
\begin{Def}
Let $n$ be a positive integer, and denote $n$ by $n=2^ml$, where $l$ is odd. Let $\overline{\sigma}(n):=2^{m+1} \sigma(l), \overline{P}_{0}(x):=1$ and
\[\overline{P}_{n}(x):=\frac{x}{n}\sum_{k=1}^{n} \overline{\sigma}(k) \overline{P}_{n-k}(x).\]
\end{Def}
We have $\overline{P}_{0}(x)=1, \overline{P}_{1}(x)=2x, \overline{P}_{2}(x)=2x^2+2x, \overline{P}_{3}(x)=\dfrac{4x(x^2+3x+2)}{3},$ and $\overline{P}_{n}(x)$ is a polynomial of degree $n$ with positive coefficients.
\begin{theorem}\label{th2}
The generating function of $\overline{P}_{n}(x)$ can be expressed by
\[\sum_{n=0}^{\infty} \overline{P}_{n}(x)q^{n}=(\prod_{n=1}^{\infty}\frac{1+q^n}{1-q^{n}})^x,\]
and the derivatives $\overline{P}_{n}^{\prime}(x)$ of $\overline{P}_{n}(x)$ can be expressed by
\[\overline{P}_{n}^{\prime}(x)=\sum_{k=1}^{n} \frac{\overline{\sigma}(k)}{k} \overline{P}_{n-k}(x).\]
\end{theorem}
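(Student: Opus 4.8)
The plan is to work with the generating function $F(q):=\sum_{n\ge 0}\overline{P}_{n}(x)\,q^{n}$, regarded as a formal power series in $q$ with coefficients in $\R[x]$ (equivalently, as an analytic function of $q$ for $|q|<1$ with $x$ a fixed real). The first step is to convert the defining recursion into a functional equation. Multiplying $n\,\overline{P}_{n}(x)=x\sum_{k=1}^{n}\overline{\sigma}(k)\,\overline{P}_{n-k}(x)$ by $q^{n}$, summing over $n\ge 1$, and recognizing a Cauchy product gives
\[
q\,F'(q)=x\,A(q)\,F(q),\qquad A(q):=\sum_{k\ge 1}\overline{\sigma}(k)\,q^{k},
\]
and together with $F(0)=\overline{P}_{0}(x)=1$ this determines $F$ uniquely: integrating $(\log F)'=xA(q)/q$ yields $F(q)=\exp\!\bigl(x\sum_{k\ge 1}\tfrac{\overline{\sigma}(k)}{k}q^{k}\bigr)$.

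The second step is the arithmetic heart of the matter: to identify $\sum_{k\ge 1}\tfrac{\overline{\sigma}(k)}{k}q^{k}$ with $\log\prod_{n\ge 1}\tfrac{1+q^{n}}{1-q^{n}}$. Expanding $\log(1+q^{n})-\log(1-q^{n})=\sum_{j\ge 1}\tfrac{1+(-1)^{j+1}}{j}q^{nj}=\sum_{j\ge 1,\ j\ \mathrm{odd}}\tfrac{2}{j}q^{nj}$ and summing over $n\ge 1$, the coefficient of $q^{k}$ equals $2\sum_{d\mid k,\ d\ \mathrm{odd}}\tfrac{1}{d}$. On the other hand, writing $k=2^{m}l$ with $l$ odd, the odd divisors of $k$ are exactly the divisors of $l$, so
\[
2\sum_{d\mid k,\ d\ \mathrm{odd}}\frac{1}{d}=\frac{2}{l}\sum_{d\mid l}\frac{l}{d}=\frac{2\sigma(l)}{l}=\frac{2^{m+1}\sigma(l)}{2^{m}l}=\frac{\overline{\sigma}(k)}{k},
\]
which is precisely the claimed identity (and is the same computation that shows $A(q)=q\,\partial_{q}\log\prod_{n\ge1}\tfrac{1+q^{n}}{1-q^{n}}$). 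Combining with the first step,
\[
F(q)=\exp\!\Bigl(x\,\log\prod_{n\ge 1}\frac{1+q^{n}}{1-q^{n}}\Bigr)=\Bigl(\prod_{n\ge 1}\frac{1+q^{n}}{1-q^{n}}\Bigr)^{x},
\]
which is the first assertion of the theorem.

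For the derivative formula, differentiate the closed form $F(q)=\exp\!\bigl(x\sum_{k\ge 1}\tfrac{\overline{\sigma}(k)}{k}q^{k}\bigr)$ with respect to $x$ to obtain $\partial_{x}F(q)=\bigl(\sum_{k\ge 1}\tfrac{\overline{\sigma}(k)}{k}q^{k}\bigr)F(q)$; extracting the coefficient of $q^{n}$ on both sides gives $\overline{P}_{n}'(x)=\sum_{k=1}^{n}\tfrac{\overline{\sigma}(k)}{k}\overline{P}_{n-k}(x)$. (Alternatively one can prove this by induction on $n$ directly from the defining recursion, differentiating it in $x$ and feeding in the identity of the second step, but the route through the closed form is shorter.)

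\textbf{Main obstacle.} There is no genuinely deep obstacle here; the argument is a standard logarithmic-derivative computation. The only points requiring care are the bookkeeping in the arithmetic identity of the second step — matching $\overline{\sigma}(k)$ with the odd-divisor sum $2k\sum_{d\mid k,\ d\ \mathrm{odd}}d^{-1}$ — and, if one insists on full rigor at the formal-power-series level, checking that the rearrangements of the double sums $\sum_{n}\sum_{j}$ are legitimate; they are, since only finitely many terms contribute to any fixed power $q^{k}$. Everything else is routine manipulation.
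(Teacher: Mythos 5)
Your proposal is correct and follows essentially the same route as the paper: convert the recursion into the $q$-logarithmic-derivative equation $qF'(q)=xA(q)F(q)$, solve to get $F(q)=\exp\bigl(x\sum_{k\ge1}\overline{\sigma}(k)q^{k}/k\bigr)$, match the exponent with $\log\prod_{n\ge1}\frac{1+q^{n}}{1-q^{n}}$, and obtain the $x$-derivative formula by differentiating the exponential form and extracting coefficients. The only cosmetic difference is that you verify the arithmetic identity by expanding $\log\frac{1+q^{n}}{1-q^{n}}$ directly as a sum over odd $j$, while the paper treats numerator and denominator separately and computes $\sigma(n)-\tau(n)=\overline{\sigma}(n)$; the two computations are equivalent.
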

Similar with the polynomials $P_{n}(x)$, we can obtain the following inequalities for $\overline{P}_{n}(x)$.
\begin{theorem}\label{th3}
Let $n$ be a positive integer and $x \in \R$ with $x\ge 1.$ Then
\[\overline{P}_{n}(x)<\overline{P}_{n+1}(x) \quad \text { and } \quad 2 \leq \overline{P}_{n}^{\prime}(x)<\overline{P}_{n+1}^{\prime}(x).\]
Moreover, if $n+1=2^{s}, s>1$, then there exists $x_{n}\in (0,1)$ such that 
\begin{equation}\label{iequality1}
\overline{P}_{n+1}\left(x_{n}\right)<\overline{P}_{n}\left(x_{n}\right).
\end{equation}

\end{theorem}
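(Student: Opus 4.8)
The plan is to establish the two monotonicity statements $\overline{P}_{n}(x)<\overline{P}_{n+1}(x)$ and $2\le\overline{P}_{n}'(x)<\overline{P}_{n+1}'(x)$ together by induction on $n$, and to handle the final claim (existence of $x_{n}\in(0,1)$ with $\overline{P}_{n+1}(x_{n})<\overline{P}_{n}(x_{n})$ when $n+1=2^{s}$, $s>1$) separately by a first-order analysis of $\overline{P}_{n+1}-\overline{P}_{n}$ at $x=0$. I would first record a fact used repeatedly: for every $m\ge0$ and every $x\ge1$ one has $\overline{P}_{m}(x)\ge1$. This is clear for $m=0$; for $m\ge1$ the polynomial $\overline{P}_{m}$ has positive coefficients and no constant term (the defining recursion carries a factor $x$), hence is nondecreasing on $[0,\infty)$, so $\overline{P}_{m}(x)\ge\overline{P}_{m}(1)=\overline{p}(m)\ge1$, where $\overline{P}_{m}(1)=\overline{p}(m)$ is the case $x=1$ of Theorem~\ref{th2}. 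Feeding this into the derivative formula of Theorem~\ref{th2} and using $\overline{\sigma}(1)=2$ yields, for $n\ge1$ and $x\ge1$,
\[
\overline{P}_{n}'(x)=\sum_{k=1}^{n}\frac{\overline{\sigma}(k)}{k}\,\overline{P}_{n-k}(x)\ \ge\ \overline{\sigma}(1)\,\overline{P}_{n-1}(x)\ \ge\ 2,
\]
so the bound $2\le\overline{P}_{n}'(x)$ is immediate and needs no induction.

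Next I would prove by induction on $N$ that $\overline{P}_{m-1}(x)<\overline{P}_{m}(x)$ for all $1\le m\le N$ and all $x\ge1$; the base case $N=1$ is $1<2x$. For the step $N\to N+1$, subtracting two instances of the derivative formula gives
\[
\overline{P}_{N+1}'(x)-\overline{P}_{N}'(x)=\frac{\overline{\sigma}(N+1)}{N+1}\,\overline{P}_{0}(x)+\sum_{k=1}^{N}\frac{\overline{\sigma}(k)}{k}\bigl(\overline{P}_{N+1-k}(x)-\overline{P}_{N-k}(x)\bigr),
\]
and every difference appearing on the right has the form $\overline{P}_{j}(x)-\overline{P}_{j-1}(x)$ with $1\le j\le N$, hence is positive for $x\ge1$ by the induction hypothesis; therefore $\overline{P}_{N+1}'(x)>\overline{P}_{N}'(x)$ on $[1,\infty)$, which is exactly the strict inequality $\overline{P}_{n}'(x)<\overline{P}_{n+1}'(x)$ for $n=N$. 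Now put $h(x)=\overline{P}_{N+1}(x)-\overline{P}_{N}(x)$: we have just shown $h'>0$ on $[1,\infty)$, and $h(1)=\overline{p}(N+1)-\overline{p}(N)>0$ by the strict monotonicity of the overpartition function (which itself follows from the injection that adjoins a non-overlined part equal to $1$), so $h(x)\ge h(1)>0$ for all $x\ge1$. This closes the induction and simultaneously delivers the first statement of the theorem and $\overline{P}_{n}'<\overline{P}_{n+1}'$ for every $n\ge1$.

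For the last assertion, assume $n+1=2^{s}$ with $s>1$, so $n=2^{s}-1\ge3$, and set $g(x)=\overline{P}_{n+1}(x)-\overline{P}_{n}(x)$. Both polynomials vanish at $0$, so $g(0)=0$. Evaluating the derivative formula of Theorem~\ref{th2} at $x=0$, where only the term $k=m$ survives (because $\overline{P}_{j}(0)=0$ for $j\ge1$ and $\overline{P}_{0}(0)=1$), gives $\overline{P}_{m}'(0)=\overline{\sigma}(m)/m$. Hence
\[
g'(0)=\frac{\overline{\sigma}(2^{s})}{2^{s}}-\frac{\overline{\sigma}(2^{s}-1)}{2^{s}-1}=2-\frac{2\,\sigma(2^{s}-1)}{2^{s}-1},
\]
using $\overline{\sigma}(2^{s})=2^{s+1}\sigma(1)=2^{s+1}$ and, since $2^{s}-1$ is odd, $\overline{\sigma}(2^{s}-1)=2\sigma(2^{s}-1)$. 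As $2^{s}-1>1$ we have $\sigma(2^{s}-1)\ge2^{s}$, so $g'(0)\le2-\dfrac{2^{s+1}}{2^{s}-1}<0$. Combined with $g(0)=0$, this forces $g<0$ on some interval $(0,\varepsilon)$, and any $x_{n}\in(0,\varepsilon)\cap(0,1)$ satisfies $\overline{P}_{n+1}(x_{n})<\overline{P}_{n}(x_{n})$.

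The difficulty here is organizational rather than conceptual: the argument must prove $\overline{P}_{N+1}'-\overline{P}_{N}'>0$ from the first statement at all smaller indices, and only then recover the first statement at $N+1$ by integrating this derivative bound outward from the anchor value $h(1)=\overline{p}(N+1)-\overline{p}(N)>0$; the need for that anchor is precisely why strict monotonicity of $\overline{p}$ must enter. Once this scaffolding is in place, everything reduces to the recursions in the Definition and in Theorem~\ref{th2} together with the elementary bound $\sigma(m)\ge m+1$ for $m>1$ that makes $g'(0)$ negative in the exceptional cases.
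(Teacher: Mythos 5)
Your proposal is correct and follows essentially the same route as the paper: induction on $n$ using the derivative formula of Theorem~\ref{th2} to get $\overline{P}_{n+1}'>\overline{P}_n'$ on $[1,\infty)$, anchoring the difference $\overline{P}_{n+1}-\overline{P}_n$ at $x=1$ via the strict monotonicity of $\overline{p}$, and for the exceptional case computing $\Delta_n'(0)=\overline{\sigma}(n+1)/(n+1)-\overline{\sigma}(n)/n<0$ from $\overline{\sigma}(2^s)=2^{s+1}$ and $\sigma(2^s-1)\ge 2^s$. The only cosmetic difference is that you obtain the bound $2\le\overline{P}_n'(x)$ directly from the $k=1$ term of the derivative formula rather than from $\overline{P}_1'(x)=2$ and the monotone chain of derivatives, which is equally valid.
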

\begin{theorem}\label{th4}
Let $a\ge b$ be positive integers and $x\ge 1$, then
\begin{equation}\label{ie2}
\overline{P}_{a}(x) \overline{P}_{b}(x)>\overline{P}_{a+b}(x),
\end{equation}
except for $(a,b,x)=(1,1,1),(2,1,1)$.
\end{theorem}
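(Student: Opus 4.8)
The plan is to establish first the non-strict inequality $\overline{P}_a(x)\overline{P}_b(x)\ge\overline{P}_{a+b}(x)$ for all integers $a,b\ge0$ and all real $x\ge1$, by strong induction on $n=a+b$, and then to sharpen it to the strict inequality \eqref{ie2} by pinning down precisely where equality can occur. Put $f_{a,b}(x):=\overline{P}_a(x)\overline{P}_b(x)-\overline{P}_{a+b}(x)$; this is a polynomial, $f_{a,b}=f_{b,a}$, and $f_{a,0}\equiv0$, so in the inductive step I may assume $a\ge b\ge1$. The engine of the argument is a derivative identity coming from Theorem \ref{th2}: differentiating $f_{a,b}$, inserting $\overline{P}_n'(x)=\sum_{k=1}^{n}\tfrac{\overline{\sigma}(k)}{k}\overline{P}_{n-k}(x)$, and splitting the sum for $\overline{P}_{a+b}'$ at $k=a$ (re-indexing the tail as $k=a+j$) gives
\[
f_{a,b}'(x)\;=\;\sum_{k=1}^{a}\frac{\overline{\sigma}(k)}{k}\,f_{a-k,b}(x)\;+\;\sum_{j=1}^{b}\Big(\frac{\overline{\sigma}(j)}{j}\,\overline{P}_a(x)-\frac{\overline{\sigma}(a+j)}{a+j}\Big)\overline{P}_{b-j}(x).
\]

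In the first sum each pair $(a-k,b)$ has total index strictly below $n$ (the term $k=a$ being $f_{0,b}\equiv0$), so that sum is $\ge0$ on $[1,\infty)$ by the induction hypothesis. For the second sum the key point is the elementary estimate
\begin{equation*}
\frac{\overline{\sigma}(j)}{j}\,\overline{P}_a(x)\ \ge\ \frac{\overline{\sigma}(a+j)}{a+j}\qquad(1\le j\le a,\ x\ge1),\tag{$\star$}
\end{equation*}
which, together with $\overline{P}_{b-j}(x)\ge1$ for $x\ge1$, makes every summand there non-negative; hence $f_{a,b}'(x)\ge0$ for $x\ge1$, so $f_{a,b}$ is non-decreasing on $[1,\infty)$ and $f_{a,b}(x)\ge f_{a,b}(1)=\overline{p}(a)\overline{p}(b)-\overline{p}(a+b)$, which is $\ge0$ by Theorem \ref{th1} and the remark following it. This closes the induction once the finitely many base cases $n\le3$ are read off from the explicit polynomials $\overline{P}_0,\dots,\overline{P}_3$. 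As for $(\star)$: writing $j=2^{m}l$ with $l$ odd gives $\overline{\sigma}(j)/j=2\sigma(l)/l\ge2$, while $\overline{P}_a(x)\ge\overline{P}_a(1)=\overline{p}(a)\ge a+1$ (the coefficients of $\overline{P}_a$ are non-negative, and $\overline{p}$ is increasing with $\overline{p}(0)=1$), so the left side of $(\star)$ is at least $2(a+1)$; on the other hand $\overline{\sigma}(N)/N=2\sigma(l)/l\le2\sum_{d\le l}1/d\le2(1+\ln l)\le2(1+\ln N)$, and here $N=a+j\le2a$, so the right side is at most $2(1+\ln 2a)\le2(a+1)$ since $\ln 2a\le a$ for $a\ge1$.

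It remains to pass from "$\ge$" to "$>$" and to exhibit the exceptions. If $n=a+b\ge4$ then $(a,b)$ is not one of the two exceptional pairs of Theorem \ref{th1}, so $f_{a,b}(1)=\overline{p}(a)\overline{p}(b)-\overline{p}(a+b)>0$ and hence $f_{a,b}(x)\ge f_{a,b}(1)>0$ for every $x\ge1$. If $n\le3$ with $a\ge b\ge1$, i.e. $(a,b)\in\{(1,1),(2,1)\}$, then $f_{1,1}(x)=2x(x-1)$ and $f_{2,1}(x)=\tfrac{8}{3}x(x^{2}-1)$, each of which vanishes at $x=1$ and is strictly positive for every $x>1$. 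Putting the cases together gives Theorem \ref{th4}, the exceptional triples being exactly $(1,1,1)$ and $(2,1,1)$.

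The step I expect to be the main obstacle is controlling the sign of $f_{a,b}'$ on $[1,\infty)$ — that is, combining the exact identity above with the estimate $(\star)$ — and, more conceptually, the fact that the scheme is forced to feed in the combinatorial input of Theorem \ref{th1} at the single point $x=1$. The inequality genuinely fails for $x$ slightly below $1$ (this is precisely the content of \eqref{iequality1} in Theorem \ref{th3}, and indeed $f_{a,b}$ already has the negative linear coefficient $-\overline{\sigma}(a+b)/(a+b)$), so no monotonicity of $f_{a,b}$ can be expected near $0$; the induction must therefore be arranged so that it only ever invokes $f_{a',b}\ge0$ for $x\ge1$ together with the value $f_{a,b}(1)\ge0$. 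Once that structure is in place, verifying $(\star)$ and the base cases is routine.
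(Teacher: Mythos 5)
Your proposal is correct, and while it shares the paper's overall architecture (induction on $n=a+b$, differentiation via $\overline{P}_n'(x)=\sum_{k=1}^n\tfrac{\overline{\sigma}(k)}{k}\overline{P}_{n-k}(x)$, and anchoring at $x=1$ through Theorem \ref{th1}), it closes the key derivative estimate in a genuinely different and markedly more economical way. The paper applies the induction hypothesis to \emph{both} products $\overline{P}_{a-k}\overline{P}_b$ and $\overline{P}_a\overline{P}_{b-k}$, collapsing everything onto $\overline{P}_{a+b-k}$, and is then forced to prove $\overline{P}_{a+b-k}(x)>(1+\ln(2a))\,\overline{P}_{b-k}(x)$; that comparison costs it the Rademacher-type bound (\ref{ie7}), Lemmas \ref{le3} and \ref{le4}, an asymptotic argument valid only for $a\ge 94$, and a machine check of the range $1\le k<b\le a\le 93$. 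You instead apply the hypothesis only to the $\overline{P}_{a-k}\overline{P}_b$ terms and pair $\tfrac{\overline{\sigma}(j)}{j}\overline{P}_a\overline{P}_{b-j}$ directly against $\tfrac{\overline{\sigma}(a+j)}{a+j}\overline{P}_{b-j}$, so the potentially large factor $\overline{P}_{b-j}$ cancels and all that remains is $(\star)$, which follows from $\overline{\sigma}(j)/j\ge 2$, $\overline{P}_a(x)\ge\overline{p}(a)\ge a+1$, $\overline{\sigma}(N)/N\le 2(1+\ln N)$ and $\ln(2a)\le a$ --- I checked the identity for $f_{a,b}'$ and each of these bounds, and they hold. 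What your route buys is the complete elimination of the analytic input (\ref{ie7}) and of the computer verification from the proof of Theorem \ref{th4} (they remain needed only where the paper uses them elsewhere); what the paper's route buys, arguably, is the explicit intermediate inequality $\overline{P}_{a+b-k}(x)>(1+\ln(2a))\overline{P}_{b-k}(x)$, which is of some independent interest but is not needed for the theorem. Your handling of the strict versus non-strict inequality and of the exceptional triples $(1,1,1)$, $(2,1,1)$ via $f_{1,1}(x)=2x(x-1)$ and $f_{2,1}(x)=\tfrac{8}{3}x(x^2-1)$ is also correct.
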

The theorem is proved by induction, and the proof contains flexible applications to a formula for the derivative of $\overline{P}_{n}(x)$, a bound of  $\overline{p}(n)$  (\ref{ie7}), Theorem \ref{th1} and Theorem \ref{th3}. \par
The numerical values in Table \ref{tab1} for $1\le a,b \le 10$ suggest that inequality (\ref{ie2}) does not always hold for all real numbers $0<x<1$, but for values of $x$ larger than the numbers(rounded to two decimal places) in position $(a,b)$ inequality (\ref{ie2}) holds.\par
By the generating function of $\overline{P}_{n}(x)$, we consider the $k$-colored overpartition function.
A partition is called a $k$-colored overpartition if each part can appear as $k$ colors, and the last occurrence of each part with distinct sizes or colors may be overlined. Let $\overline{p}_{-k}(n)$ denote the number of $k$-colored overpartitions of $n$.
\begin{example}
The number of $2$-colored overpartitions of $n=2$ are $\overline{p}_{-2}(2)=12: 2_{2}, 2_{1}, \overline{2}_{2}, \overline{2}_{1}, 1_{2}+1_{1}, 1_{2}+1_{2}, 1_{1}+1_{1}, 1_{1}+\overline{1}_{1}, \overline{1}_{2}+1_{1}, 1_{2}+\overline{1}_{1}, 1_{2}+\overline{1}_{2}, \overline{1}_{2}+\overline{1}_{1}.$
\end{example}
\noindent The generating function of $\overline{p}_{-k}(n)$ is given by
\[\sum_{n=0}^{\infty} \overline{p}_{-k}(n) q^{n}=(\prod_{n=1}^{\infty}\frac{1+q^n}{1-q^{n}})^k.\]
Then, we have $\overline{p}_{-k}(n)=\overline{P}_{n}(k)$ for positive integers $k$, in particular, $\overline{P}_{n}(1)=\overline{p}(n)$. Thus, Theorem \ref{th1} is the case $x=1$ of Theorem \ref{th4}, and for the case $x>1$,
we can obtain the following inequality for $k$-colored overpartitions function.
\begin{theorem}\label{th5}
If $a,b$ are positive integers and $k\ge 2$, we have
\[\overline{p}_{-k}(a) \overline{p}_{-k}(b)>\overline{p}_{-k}(a+b).\]
\end{theorem}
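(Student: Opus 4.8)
The plan is to deduce Theorem \ref{th5} directly from Theorem \ref{th4} by specializing the real variable $x$ to the integer $k$. The bridge between the two statements is the identity $\overline{p}_{-k}(n)=\overline{P}_{n}(k)$, so the first task is to justify it cleanly and then to check that the excluded cases of Theorem \ref{th4} do not interfere.

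First I would verify the generating function for $k$-colored overpartitions combinatorially. Fix a size $m\ge 1$ and a color $c\in\{1,\dots,k\}$. Inside a $k$-colored overpartition, the parts equal to $m$ in color $c$ form a block consisting of finitely many non-overlined copies together with at most one overlined copy placed last; this block is enumerated by $\tfrac{1}{1-q^{m}}(1+q^{m})=\tfrac{1+q^{m}}{1-q^{m}}$. Since the $(m,c)$-blocks over all $m\ge 1$ and all $k$ colors are chosen independently, multiplying gives
\[
\sum_{n=0}^{\infty}\overline{p}_{-k}(n)q^{n}
=\prod_{m=1}^{\infty}\left(\frac{1+q^{m}}{1-q^{m}}\right)^{k}
=\left(\prod_{m=1}^{\infty}\frac{1+q^{m}}{1-q^{m}}\right)^{k}.
\]
Comparing this with the generating function of $\overline{P}_{n}(x)$ from Theorem \ref{th2} evaluated at $x=k$, and equating coefficients of $q^{n}$, yields $\overline{p}_{-k}(n)=\overline{P}_{n}(k)$ for every $n\ge 0$ and every positive integer $k$; in particular $\overline{P}_{n}(1)=\overline{p}(n)$.

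Next, since the asserted inequality is symmetric in $a$ and $b$, I may assume $a\ge b$. Now apply Theorem \ref{th4} with $x=k$. Because $k\ge 2$, the hypothesis $x\ge 1$ holds, and the two excluded triples in Theorem \ref{th4}, namely $(a,b,x)=(1,1,1)$ and $(2,1,1)$, both have third coordinate equal to $1$ and therefore cannot occur when $x=k\ge 2$. Hence $\overline{P}_{a}(k)\,\overline{P}_{b}(k)>\overline{P}_{a+b}(k)$ holds unconditionally, which, after substituting $\overline{p}_{-k}(n)=\overline{P}_{n}(k)$, is exactly Theorem \ref{th5}.

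There is no genuine obstacle here: the statement is a specialization of an already established polynomial inequality. The only points demanding a little care are the combinatorial bookkeeping in the generating-function identity — in particular that different colors of the same part size are treated as independent data, in agreement with the convention illustrated in the Example — and the observation that the exceptional set of Theorem \ref{th4} lies entirely on the line $x=1$ and is thus vacuous for integer colors $k\ge 2$.
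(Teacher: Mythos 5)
Your proposal is correct: the block-by-block generating function computation gives $\sum_{n\ge 0}\overline{p}_{-k}(n)q^{n}=\bigl(\prod_{m\ge 1}\tfrac{1+q^{m}}{1-q^{m}}\bigr)^{k}$, comparison with Theorem \ref{th2} yields $\overline{p}_{-k}(n)=\overline{P}_{n}(k)$, and since both exceptional triples of Theorem \ref{th4} lie on the line $x=1$, the specialization $x=k\ge 2$ proves Theorem \ref{th5} with no exceptions. This is, however, not the route the paper takes in its dedicated proof. The paper itself concedes in Section 6 that Theorem \ref{th5} is a special case of Theorem \ref{th4} --- essentially your observation --- but then deliberately gives an independent, purely combinatorial proof: it establishes Lemmas \ref{le5}, \ref{le6} and \ref{le7} by constructing explicit injections between sets of $k$-colored overpartitions (splitting a partition of $a+b$ at the index where the tail first reaches $b$, with careful case analysis on overlined parts and colors), and then runs an induction on $n=a+b$ using the decomposition of $\overline{p}_{-k}(a+b)$ according to whether a part $1_{1}$ occurs. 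Your approach buys brevity, but it inherits the full weight of Theorem \ref{th4}, whose proof is analytic and inductive, relying on the Rademacher-type bounds (\ref{ie7}), the estimate of Heim--Neuhauser for $a\ge 94$, and a computer verification for $1\le k<b\le a\le 93$. The paper's combinatorial proof is self-contained, avoids all of that machinery, and exhibits an explicit one-to-one (but not onto) correspondence witnessing the strict inequality, which is the stated motivation for including it.
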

This paper is organized as follows. In Sect.\ref{sec2}, we present a combinatorial proof to the Liu-Zhang inequality, and we introduce the polynomials $\overline{P}_{n}(x)$ in Sect.\ref{sec3}, we show Theorem \ref{th3} and Theorem \ref{th4} in the next two sections. In the last section, we conclude an inequality for $k$-colored overpartitons function and give a combinatorial proof of Theorem \ref{th5}.
\section{Proof of Theorem \ref{th1}}\label{sec2}
In the section we present a combinatorial proof of Theorem \ref{th1}. We introduce some notations that will be used later. We denote the number of overpartitions of $n$ that satisfy a given condition by $\overline{p}\left(n \mid \text {condition }\right)$, while the enumerated set will be denoted by $\overline{P}\left(n \mid \text {condition }\right).$ \par The Cartesian product of two sets of partitions $A$ and $B$, denoted as $A \oplus B$, is given by
\[A \oplus B=\left\{(\lambda ; \mu ) \mid \lambda \in A, \mu \in B \right\}.\]
And we use the superscript to refer multiplicities. We arrange the parts in a unique way such that their sizes are non-increasing and the part which is overlined is no greater than the same numerical part which is non-overlined. For instance, $(4, \overline{4}, 2, \overline{2})$ is a overpartition of $12$ written in the standard way.\par
To show Theorem \ref{th1}, we need the following lemmas.
\begin{lemma}\label{le11}
Let $a,b$ be integers with $a\ge b \ge 2$, then
\[ \overline{p}\left(a \mid \textup{no } 1^{\prime} \mathrm{s}\right) \overline{p}\left(b \mid \textup{no } 2^{\prime} \mathrm{s}\right) > \overline{p}\left(a+b \mid \textup{no } 1^{\prime} \mathrm{s} \textup{ and no } 2^{\prime} \mathrm{s}\right),\]
\end{lemma}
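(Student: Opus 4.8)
The plan is to adapt the combinatorial argument of Alanazi, Gagola and Munagi \cite{Ala} for $p(n)$: I would first construct an injection
\[
\phi\colon\ \overline{P}\!\left(a+b\mid\text{no }1\text{'s and no }2\text{'s}\right)\ \longrightarrow\ \overline{P}\!\left(a\mid\text{no }1\text{'s}\right)\oplus\overline{P}\!\left(b\mid\text{no }2\text{'s}\right),
\]
and then exhibit one element of the codomain that is not in the image of $\phi$. Since all three sets are finite and nonempty (using $a+b\ge4$ and $a\ge b\ge2$), injectivity yields the corresponding $\ge$ in the lemma and the missed element upgrades it to $>$.

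To define $\phi$, take $\overline{\lambda}\in\overline{P}(a+b\mid\text{no }1\text{'s and no }2\text{'s})$, written in standard form with (possibly overlined) parts $\lambda_1\ge\lambda_2\ge\cdots\ge\lambda_r\ge3$. Let $j\ge0$ be largest with $\lambda_1+\cdots+\lambda_j\le a$, and set $d:=a-(\lambda_1+\cdots+\lambda_j)$ and $t:=\lambda_{j+1}$, so $0\le d<t$ (here $j<r$ since the total $a+b$ exceeds $a$). I would then let $\overline{\mu}$ consist of $\lambda_1,\dots,\lambda_j$ together with one extra part $d$ when $d>0$, and let $\overline{\nu}$ consist of the part $t-d$ together with $\lambda_{j+2},\dots,\lambda_r$, transferring to the piece $t-d\in\overline{\nu}$ the overline of $\lambda_{j+1}$ if it carried one (and never creating a new overline). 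Then $\overline{\mu}$ is an overpartition of $a$ and $\overline{\nu}$ one of $b$, and I set $\phi(\overline{\lambda})=(\overline{\mu};\overline{\nu})$. Because the greedy rule makes the fragment $d$ the unique smallest part of $\overline{\mu}$ (every other part is $\ge t$), the split can be undone as in \cite{Ala} by detaching this fragment and re-gluing it inside $\overline{\nu}$ to recover $\overline{\lambda}$; so $\phi$ is injective.

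The main obstacle is that this raw split can violate the forbidden-part conditions: $\overline{\mu}$ acquires a part equal to $1$ precisely when $d=1$, and $\overline{\nu}$ acquires a part equal to $2$ precisely when $t-d=2$; there is also bookkeeping when the new part $t-d$ duplicates, in value and overline, a part already present in $\overline{\nu}$. This is exactly where the hypothesis on $\overline{\lambda}$ is used: since every part of $\overline{\lambda}$ is at least $3$, each bad configuration can be repaired by a bounded number of extra unit transfers between $\overline{\mu}$ and $\overline{\nu}$ --- for instance, when $t-d=2$ one may split that $2$ further as $1+1$ inside $\overline{\nu}$, which is legal because $\overline{\nu}$ forbids only $2$'s, and when $d=1$ one instead shifts a unit off a part of $\overline{\mu}$ that is $\ge3$, or re-splits $t$, compensating so that $\overline{\mu}$ remains free of $1$'s. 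One then has to check that the repaired $\phi$ still outputs genuine overpartitions with the prescribed forbidden parts and correct sizes, and that each repair (and the split location and fragment) can be read back off from $(\overline{\mu};\overline{\nu})$, so that injectivity survives. Working through this finite case analysis, together with the overline bookkeeping (which is the only feature absent from the $p(n)$ case in \cite{Ala}), is the technical heart of the proof.

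For non-surjectivity, I would note that in every pair in the image of $\phi$ the block $\overline{\mu}$ is assembled from whole parts of $\overline{\lambda}$, all $\ge3$, together with only a bounded number of fragment parts, so $\overline{\mu}$ can contain only boundedly many parts equal to $2$. Consequently, once $a$ is large enough, the pair consisting of $(2,2,\dots,2)$ or $(3,2,2,\dots,2)\in\overline{P}(a\mid\text{no }1\text{'s})$ --- according to the parity of $a$ --- together with any member of $\overline{P}(b\mid\text{no }2\text{'s})$ lies outside the image of $\phi$, which forces the strict inequality; the finitely many remaining pairs $(a,b)$ with $a\ge b\ge2$ are dispatched by direct enumeration.
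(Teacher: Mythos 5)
Your overall strategy is the same as the paper's: build an explicit injection from $\overline{P}(a+b\mid\text{no }1\text{'s and no }2\text{'s})$ into $\overline{P}(a\mid\text{no }1\text{'s})\oplus\overline{P}(b\mid\text{no }2\text{'s})$ by cutting the overpartition at the point where the weight splits into $a$ and $b$, and then exhibit a pair missed by the map. You have also correctly located where the difficulty lies: the fragment can equal a forbidden part ($d=1$ on the $a$-side, $t-d=2$ on the $b$-side), and the overlines must be tracked. But the proposal stops exactly there. The ``repairs'' are not pinned down: for $d=1$ you offer two mutually inconsistent options (``shift a unit off a part of $\overline{\mu}$ that is $\ge 3$, or re-split $t$'') without choosing one, and without checking that the repaired images stay disjoint from the unrepaired ones. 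This is not a routine verification. In the paper's proof the analogous repair (absorb the stray $1$ into $\lambda_{i-1}$ and split $\lambda_{i-1}+1$ into two near-equal halves, preserving the overline) requires a genuine argument to separate its images from those of the $y\ge 2$ case --- one shows that a collision forces $x=1$ in one case and $x\ge 2$ in the other. Nothing in your sketch plays that role, and with the ``shift a unit off some part $\ge 3$'' variant injectivity can actually fail (the output is indistinguishable from a $d=2$ split of a neighbouring configuration). So the technical heart of the lemma, which you yourself flag, is missing rather than deferred.

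The non-surjectivity step is also softer than it needs to be. You argue that for $a$ ``large enough'' the all-$2$'s partition of $a$ cannot occur as a first coordinate, with the remaining pairs ``dispatched by direct enumeration''; but without an explicit bound on how many small parts the repairs can create, the enumeration cannot be carried out, and the threshold is left unquantified. The paper avoids this entirely by choosing the map so that a single, uniformly describable pair --- $(\cdots,\overline{2},\overline{1};\cdots)$ for $a\ge 3$, and $(\overline{2};\overline{2})$ for $a=b=2$ --- is never attained, for every $a\ge b\ge 2$ at once. (A further structural difference worth noting: the paper converts the $b$-side fragment $x$ into $1^{x}$, a block of ones, which is legal there since only $2$'s are forbidden on the $b$-side and which makes the fragment immediately recognizable when inverting the map; keeping $t-d$ as a single part, as you do, is what forces your extra $t-d=2$ repair and complicates the inversion.) In short: right plan, same skeleton as the paper, but the case analysis that constitutes the proof has not been done.
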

\begin{remark}
The condition ``no $i^{\prime} \mathrm{s}$'' means that all parts of each partition are not equal to $i$, but one of them may be equal to $\overline{i}$, where $i=1,2.$
\end{remark}
\begin{proof}
For $\lambda=\left(\lambda_{1}, \lambda_{2}, \cdots, \lambda_{t}\right) \in \overline{P}(a+b),$ let
\[ i=i(\lambda)=\max \left\{j \in \mathbb{N} \mid 1 \leq j \leq t, \lambda_{j}+\cdots+\lambda_{t} \geq b\right\}. \]
Moreover, let $\lambda_{i}=x+y(x=x(\lambda), y=y(\lambda))$ such that
\[ x+\lambda_{i+1}+\cdots+\lambda_{t}=b \quad \text { and } \quad y+\lambda_{1}+\cdots+\lambda_{i-1}=a. \]
Note that $0<x \leq \lambda_{i}.$ Now define a map
 \[f: \overline{P}\left(a+b \mid \text {no } 1^{\prime} \mathrm{s} \text { and no } 2^{\prime} \mathrm{s}\right) \rightarrow \overline{P}\left(a \mid \text {no } 1^{\prime} \mathrm{s}\right) \oplus \overline{P}\left(b \mid \text {no } 2^{\prime} \mathrm{s}\right) \]
as follows. For $\lambda=\left(\lambda_{1}, \lambda_{2}, \cdots, \lambda_{t}\right) \in \overline{P}\left(a+b \mid \text {no } 1^{\prime} \mathrm{s} \text { and no } 2^{\prime} \mathrm{s}\right),$
\begin{equation*}
f(\lambda):= \begin{cases}\left(\lambda_{1}, \lambda_{2}, \cdots, \lambda_{i-1} ; \lambda_{i}, \cdots, \lambda_{t}\right), & \text { if } y=0 ; \\
\left(\lambda_{1}, \lambda_{2}, \cdots, \lambda_{i-1}, \overline{y}; \lambda_{i+1}, \cdots, \lambda_{t}, 1^{x}\right), & \text { if } y \geq 1, \lambda_{i} \text { is overlined }; \\
\left(\lambda_{1}, \lambda_{2}, \cdots, \lambda_{i-1}, y ; \lambda_{i+1}, \cdots, \lambda_{t}, 1^{x}\right), & \text { if } y \geq 2, \lambda_{i} \text { is non-overlined } ; \\
(\lambda_{1}, \ldots, \lambda_{i-2},\left\lceil\frac{\lambda_{i-1}+1}{2}\right\rceil,\left\lfloor\frac{\lambda_{i-1}+1}{2}\right\rfloor ; \\
  \lambda_{i+1}, \ldots, \lambda_{t}, 1^{x}), & \text { if } y = 1, \lambda_{i} \text { and } \lambda_{i-1} \text { are non-overlined } ; \\
& \\
(\lambda_{1}, \ldots, \lambda_{i-2},\overline{\left\lceil\frac{\lambda_{i-1}+1}{2}\right\rceil},\left\lfloor\frac{\lambda_{i-1}+1}{2}\right\rfloor ; \\
  \lambda_{i+1}, \ldots, \lambda_{t}, 1^{x}), & \text { if } y = 1, \lambda_{i} \text { is non-overlined },\\
& \text { and } \lambda_{i-1} \text { is overlined }. \end{cases}
\end{equation*}
Here, for convenience, in the cases $y\neq 0$ and $\lambda_{t}=\overline{1}$, we denote the partitions $(\lambda_{i+1}, \cdots, \lambda_{t-1}, 1^{x}, \lambda_{t})$ of $b$ by $(\lambda_{i+1}, \cdots, \lambda_{t-1}, \lambda_{t}, 1^{x})$ in the images of $f$, and for the cases $y=1, \lambda_{i-1}$ is odd and overlined and $\lambda_{i}$ is non-overlined, write the partitions $(\lambda_{1}, \ldots, \lambda_{i-2}, \left\lfloor\frac{\lambda_{i-1}+1}{2}\right\rfloor, \overline{\left\lceil\frac{\lambda_{i-1}+1}{2}\right\rceil})$ of $a$ by $(\lambda_{1}, \ldots, \lambda_{i-2},\overline{\left\lceil\frac{\lambda_{i-1}+1}{2}\right\rceil},\left\lfloor\frac{\lambda_{i-1}+1}{2}\right\rfloor)$. Since $a \ge 2$ and $y+\lambda_{1}+\cdots+\lambda_{i-1}=a$, then $\lambda_{i-1}$ does not vanish when $y=1$. Furthermore, since we arrange the parts of each partition in a unique way, it follows that $\lambda_{i-1}\ge \lambda_{i}$ if $\lambda_{i-1}$ exists. Moreover, for the case of $y=1$ and $\lambda_{i}$ is non-overlined, since $\lambda_{j}\neq 1,2(1\le j \le t),$ we must have $\lambda_{i}\ge 3$, which implies that $x=\lambda_{i}-y\ge 2$ and $\left\lfloor\frac{\lambda_{i-1}+1}{2}\right\rfloor \ge 2$. Thus, $f$ is well-defined.  In the third case, if $\lambda_{i-1}$ exists, we must have $\lambda_{i-1}-y\ge \lambda_{i}-y=x>0$. Notice that $\left\lceil\frac{\lambda_{i-1}+1}{2}\right\rceil- \left\lfloor\frac{\lambda_{i-1}+1}{2}\right\rfloor \in \left\{0,1\right\}$. To show the images of the third case are different from the fourth and fifth cases, it suffices to consider $\lambda_{i-1}-y=1$ in the third case. Since $0 < x=\lambda_{i}-y\le \lambda_{i-1}-y=1$, then $x=1$ in such situation, while $x\ge 2$ in the fourth and fifth cases. Thus, $f$ is injective. Since $b\ge 2$, this leads to $\lambda_{i-1}\neq \overline{1}$ for the case $y=0$. On the other hand, in the second case, it is easy to see that $\lambda_{i-1}\neq \overline{2}$ when $y=1$, then the pair $\left(\cdots, \overline{2}, \overline{1} ; \cdots \right)$ is not in the images of $f$ for $a\ge 3$. And for $a=2$, since $a\ge b\ge 2$, it involves that $b=2$, and $(\overline{2};\overline{2})$ is not the image of $f$. Therefore, $f$ is one-to-one but not onto.
\end{proof}
\begin{lemma}\label{le2}
If $a$ is a positive integer, then
\[\overline{p}\left(a \mid \textup {no } 1^{\prime} \mathrm{s}\right) \overline{p}\left(1 \right) \ge \overline{p}\left(a+1 \mid \textup {no } 1^{\prime} \mathrm{s}\right).\]
Moreover, if $a\ge 3$, then
\[\overline{p}\left(a \mid \textup {no } 1^{\prime} \mathrm{s}\right) \overline{p}\left(1 \right) >
\overline{p}\left(a+1 \mid \textup {no } 1^{\prime} \mathrm{s}\right).\]
\end{lemma}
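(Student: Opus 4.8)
The plan is to establish both inequalities at once by constructing an explicit injection
\[
f\colon \overline{P}\bigl(a+1 \mid \textup{no } 1^{\prime}\textup{s}\bigr)\ \longrightarrow\ \overline{P}\bigl(a \mid \textup{no } 1^{\prime}\textup{s}\bigr)\ \oplus\ \overline{P}(1),
\]
and to use that $\overline{p}(1)=2$ with $\overline{P}(1)=\{1,\overline{1}\}$: injectivity of $f$ gives $\overline{p}(a\mid\textup{no }1^{\prime}\textup{s})\,\overline{p}(1)\ge\overline{p}(a+1\mid\textup{no }1^{\prime}\textup{s})$, while non-surjectivity of $f$ for $a\ge 3$ upgrades this to a strict inequality. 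The construction runs parallel to the map of Lemma~\ref{le11} specialized to $b=1$ (where necessarily $i=t$, $x=1$, $y=\lambda_t-1$), except that the part of value $2$ has to be handled separately, since naively replacing a $2$ by an $\overline{1}$ destroys injectivity.

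I would define $f$ on $\lambda\in\overline{P}(a+1\mid\textup{no }1^{\prime}\textup{s})$ by four mutually exclusive cases. If $\lambda$ contains an $\overline{1}$, write $\lambda=(\lambda_1,\dots,\lambda_{t-1},\overline{1})$ and set $f(\lambda)=(\lambda_1,\dots,\lambda_{t-1};\ \overline{1})$. Otherwise every part of $\lambda$ is $\ge 2$; let $\lambda_t$ be the smallest part. If $\lambda_t$ has value $\ge 3$, set $f(\lambda)=(\lambda_1,\dots,\lambda_{t-1},\lambda_t-1;\ 1)$, where $\lambda_t-1$ inherits the overline status of $\lambda_t$. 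If $\lambda_t=2$ is non-overlined, write $\lambda=(\lambda_1,\dots,\lambda_r,2^{m})$ with $\lambda_1,\dots,\lambda_r\ge 3$ and $m\ge 1$, and set $f(\lambda)=(\lambda_1,\dots,\lambda_r,2^{m-1},\overline{1};\ 1)$. If $\lambda_t=\overline{2}$, write $\lambda=(\lambda_1,\dots,\lambda_r,2^{m},\overline{2})$ with $\lambda_1,\dots,\lambda_r\ge 3$ and $m\ge 0$, and set $f(\lambda)=(\lambda_1,\dots,\lambda_r,2^{m},\overline{1};\ \overline{1})$. That $f$ is well defined is routine: in every case the first coordinate has weight $a$ and all of its parts are $\ge 2$ apart from at most one $\overline{1}$, and the part that is modified or created takes a value not already occurring in $\lambda$, so the result is a genuine overpartition; the second coordinate lies in $\overline{P}(1)$.

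The substance is injectivity, which I would obtain by checking that the images of the four cases are pairwise disjoint: the first produces pairs whose second coordinate is $\overline{1}$ and whose first coordinate contains no $\overline{1}$; the second and third produce pairs with second coordinate $1$, told apart by whether the first coordinate contains an $\overline{1}$; the fourth produces pairs with second coordinate $\overline{1}$ and an $\overline{1}$ in the first coordinate. On each case $f$ is patently invertible — reattach the $\overline{1}$; add $1$ back to the smallest part; delete the $\overline{1}$ and adjoin one non-overlined $2$; delete the $\overline{1}$ and adjoin an $\overline{2}$, respectively — so $f$ is injective, and the inequality $\overline{p}(a\mid\textup{no }1^{\prime}\textup{s})\,\overline{p}(1)\ge\overline{p}(a+1\mid\textup{no }1^{\prime}\textup{s})$ follows for every $a\ge 1$.

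For the strict inequality when $a\ge 3$, I would exhibit a pair $(\nu;1)$ with $\nu\in\overline{P}(a\mid\textup{no }1^{\prime}\textup{s})$ that is not a value of $f$. Such a pair could only arise from the second or third case: the third forces $\nu$ to contain an $\overline{1}$ and no $\overline{2}$, and the second forces the value-$2$ parts of $\nu$, if any, to occur with multiplicity $1$. Hence, for $a\ge 4$ with $a\ne 5$ one may take $\nu$ to be any overpartition of $a$ obtained by adjoining two parts equal to $2$ to an overpartition of $a-4$ into parts $\ge 2$; for $a=3$ take $\nu=(\overline{2},\overline{1})$; and for $a=5$ take $\nu=(\overline{2},2,\overline{1})$. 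In each instance $(\nu;1)$ is not in the image of $f$, so $f$ is not onto. I expect the main obstacle to be exactly this bookkeeping of the parts of values $1$ and $2$: the overline status of a destroyed $2$ must be stored in the $\overline{P}(1)$-coordinate rather than in the overpartition of $a$, and this has to be tracked consistently through well-definedness, disjointness and reversibility; it is also what makes the list of witnesses for non-surjectivity slightly case-dependent.
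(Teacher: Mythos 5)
Your proposal is correct and takes essentially the same route as the paper: an explicit injection of $\overline{P}\left(a+1 \mid \textup{no } 1^{\prime}\mathrm{s}\right)$ into $\overline{P}\left(a \mid \textup{no } 1^{\prime}\mathrm{s}\right) \oplus \overline{P}(1)$ built by deleting an $\overline{1}$, decrementing a smallest part of value at least $3$, or trading a part of value $2$ for an $\overline{1}$, with strictness for $a\ge 3$ obtained by exhibiting pairs outside the image; the only real difference is which element of $\overline{P}(1)$ tags the two value-$2$ cases. Your case-dependent choice of witnesses is in fact more careful than the paper's, whose single witness $\left(\cdots,\overline{2},\overline{1};\overline{1}\right)$ does not exist when $a=4$ (an overpartition of $4$ with no non-overlined $1$'s cannot contain both $\overline{2}$ and $\overline{1}$), so your version quietly repairs that small oversight.
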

\begin{proof}
Given a partition $\lambda=\left(\lambda_{1}, \lambda_{2}, \cdots, \lambda_{t}\right) \in \overline{P}\left(a+1 \mid \text {no } 1^{\prime} \mathrm{s}\right).$
Define a map
\[g_{1}: \overline{P}\left(a+1 \mid \text {no } 1^{\prime} \mathrm{s}\right) \rightarrow \overline{P}\left(a \mid \text {no } 1^{\prime} \mathrm{s}\right) \oplus \overline{P}\left(1 \right) \]
by
\begin{equation*}
g_{1}(\lambda):= \begin{cases} (\lambda_{1}, \lambda_{2}, \cdots, \lambda_{t-1},\overline{\lambda_{t}-1} ; 1), & \text { if } \lambda_{t} \geq 2, \lambda_{t} \text { is overlined }; \\
(\lambda_{1}, \lambda_{2}, \cdots, \lambda_{t-1},\lambda_{t}-1  ; 1), & \text { if } \lambda_{t} \ge 3, \lambda_{t} \text { is non-overlined }; \\
(\lambda_{1}, \lambda_{2},  \cdots , \lambda_{t-1}, \overline{1} ; \overline{1}), & \text { if } \lambda_{t}=2; \\
(\lambda_{1}, \lambda_{2}, \cdots, \lambda_{t-1} ; \lambda_{t}), & \text { if } \lambda_{t}=\overline{1}. \end{cases}
\end{equation*}
Here, $g_{1}$ is clearly well-defined. And by the definition of overpatition, in the case of $\lambda_{t}=\overline{1}$, we must have $\lambda_{t-1}\neq \overline{1},$ so the images of the third and fourth cases are disjoint. Therefore, $g_{1}$ is one-to-one, then
\[\overline{p}\left(a \mid \text {no } 1^{\prime} \mathrm{s}\right) \overline{p}\left(1 \right) \ge \overline{p}\left(a+1 \mid \text {no } 1^{\prime} \mathrm{s}\right).\]
Furthermore, if $a\ge3$, since $\lambda_{t-1} \neq \overline{2}$ for the cases $\lambda_{t}=2$, then
the pair $\left(\cdots, \overline{2}, \overline{1} ; \overline{1} \right)$ is not in the images of $g_{1}$, but $\left(\cdots, \overline{2}, \overline{1} ; \overline{1} \right) \in \overline{P}\left(a \mid \text {no } 1^{\prime} \mathrm{s}\right) \oplus \overline{P}\left(1 \right)$. Therefore, for $a\ge 3$,
\[\overline{p}\left(a \mid \text {no } 1^{\prime} \mathrm{s}\right) \overline{p}\left(1 \right) >
\overline{p}\left(a+1 \mid \text {no } 1^{\prime} \mathrm{s}\right).\]
\end{proof}
\begin{lemma}\label{le13}
If $a$ is an integer with $a\ge 2$, then
\[\overline{p}\left(a \mid \textup {no } 1^{\prime} \mathrm{s}\right) \overline{p}\left(2 \right) > \overline{p}\left(a+2 \mid \textup {no } 1^{\prime} \mathrm{s}\right).\]
\end{lemma}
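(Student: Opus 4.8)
The plan is to deduce Lemma~\ref{le13} as a short corollary of Lemma~\ref{le2} rather than building a fresh bijection, exploiting the numerical coincidence $\overline{p}(2)=4=2^{2}=\overline{p}(1)^{2}$. Recall that $\overline{p}(1)=2$ (the overpartitions of $1$ being $1$ and $\overline{1}$), while the four overpartitions of $2$ are $2,\ \overline{2},\ 1+1,\ 1+\overline{1}$; fix once and for all a bijection $\overline{P}(1)\oplus\overline{P}(1)\to\overline{P}(2)$, so that injectivity and non-surjectivity statements transfer freely between these two targets.

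First I would apply Lemma~\ref{le2} with parameter $a$ (legitimate since $a\ge 2\ge 1$) to get
\[
\overline{p}\left(a \mid \text{no } 1^{\prime}\mathrm{s}\right)\overline{p}(1)\ \ge\ \overline{p}\left(a+1 \mid \text{no } 1^{\prime}\mathrm{s}\right),
\]
and then apply Lemma~\ref{le2} again with parameter $a+1$, which satisfies $a+1\ge 3$, to get the \emph{strict} inequality
\[
\overline{p}\left(a+1 \mid \text{no } 1^{\prime}\mathrm{s}\right)\overline{p}(1)\ >\ \overline{p}\left(a+2 \mid \text{no } 1^{\prime}\mathrm{s}\right);
\]
concretely, the corresponding injection $g_{1}$ misses the pair $\left(\cdots,\overline{2},\overline{1};\overline{1}\right)$. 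Multiplying the first inequality by $\overline{p}(1)=2>0$ and chaining with the second gives
\[
\overline{p}\left(a \mid \text{no } 1^{\prime}\mathrm{s}\right)\overline{p}(1)^{2}\ \ge\ \overline{p}\left(a+1 \mid \text{no } 1^{\prime}\mathrm{s}\right)\overline{p}(1)\ >\ \overline{p}\left(a+2 \mid \text{no } 1^{\prime}\mathrm{s}\right),
\]
and since $\overline{p}(1)^{2}=4=\overline{p}(2)$ this is exactly the assertion of Lemma~\ref{le13}. Equivalently, composing the two instances of the map $g_{1}$ from Lemma~\ref{le2} yields an injection $\overline{P}\left(a+2 \mid \text{no } 1^{\prime}\mathrm{s}\right)\hookrightarrow \overline{P}\left(a \mid \text{no } 1^{\prime}\mathrm{s}\right)\oplus\overline{P}(1)\oplus\overline{P}(1)\cong \overline{P}\left(a \mid \text{no } 1^{\prime}\mathrm{s}\right)\oplus\overline{P}(2)$ that is not onto.

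In the spirit of Lemmas~\ref{le11} and~\ref{le2} one could instead write a direct injection $g_{2}\colon \overline{P}\left(a+2 \mid \text{no } 1^{\prime}\mathrm{s}\right)\to \overline{P}\left(a \mid \text{no } 1^{\prime}\mathrm{s}\right)\oplus\overline{P}(2)$ that strips a total weight $2$ off the smallest parts of $\lambda$, with cases according to the size of $\lambda_{t}$ (and, when needed, $\lambda_{t-1}$), whether these parts are overlined, and whether a $\overline{1}$ already occurs. I expect the main obstacle along that route to be the bookkeeping near the small parts: when $\lambda_{t}\in\{2,3\}$ one must ensure the first component never acquires a non-overlined $1$ and that the standard ordering (an overlined part no larger than its non-overlined twin) is preserved, and one must pin down precisely which pair in the codomain is omitted to obtain strictness, with the boundary case $a=2$ handled by a direct check. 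Since the iteration of Lemma~\ref{le2} avoids all of this, that is the argument I would record.
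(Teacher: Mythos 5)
Your proof is correct, and it takes a genuinely different route from the paper. The paper proves Lemma~\ref{le13} by constructing a fresh eight-case injection $g_{2}\colon \overline{P}\left(a+2 \mid \text{no } 1^{\prime}\mathrm{s}\right)\to \overline{P}\left(a \mid \text{no } 1^{\prime}\mathrm{s}\right)\oplus\overline{P}(2)$ (splitting on $\lambda_{t}\in\{\overline{1},2,\overline{2},3,\ge 3 \text{ overlined},\ge 4\}$ and, when $\lambda_{t}=\overline{1}$, on $\lambda_{t-1}$), then exhibiting the missed pairs $\left(\cdots,\overline{2},\overline{1};\cdots\right)$ for $a\ge 3$ and checking $a=2$ by hand. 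You instead observe that $\overline{p}(2)=4=\overline{p}(1)^{2}$ and chain two applications of Lemma~\ref{le2}: the weak inequality at parameter $a\ge 1$ and the strict one at parameter $a+1\ge 3$. The hypotheses of both applications are satisfied exactly when $a\ge 2$, the strictness is inherited from the second application, and the cardinality chain $\overline{p}\left(a \mid \text{no } 1^{\prime}\mathrm{s}\right)\cdot 4\ \ge\ \overline{p}\left(a+1 \mid \text{no } 1^{\prime}\mathrm{s}\right)\cdot 2\ >\ \overline{p}\left(a+2 \mid \text{no } 1^{\prime}\mathrm{s}\right)$ is airtight; the downstream uses of Lemma~\ref{le13} (in the unnamed lemma and in Theorem~\ref{th1}) only need the numerical inequality, so nothing later in the paper is lost. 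Your argument is shorter, avoids all of the ordering bookkeeping around small parts, and dispenses with the separate $a=2$ check; what the paper's version buys is a single self-contained map removing weight $2$ in one step, which is marginally more in the explicit-bijection spirit of the section, though as you note the composite of the two instances of $g_{1}$ is itself an explicit non-surjective injection into $\overline{P}\left(a \mid \text{no } 1^{\prime}\mathrm{s}\right)\oplus\overline{P}(1)\oplus\overline{P}(1)$, which has the same cardinality as the intended codomain.
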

\begin{proof}
Given a partition $\lambda=\left(\lambda_{1}, \lambda_{2}, \cdots, \lambda_{t}\right) \in \overline{P}\left(a+2 \mid \text {no } 1^{\prime} \mathrm{s}\right).$
Define a map
\[g_{2}: \overline{P}\left(a+2 \mid \text {no } 1^{\prime} \mathrm{s}\right) \rightarrow \overline{P}\left(a \mid \text {no } 1^{\prime} \mathrm{s}\right) \oplus \overline{P}\left(2 \right) \]
by
\begin{equation*}
g_{2}(\lambda):= \begin{cases} (\lambda_{1}, \lambda_{2}, \cdots, \lambda_{t-1},\overline{\lambda_{t}-2} ; 2), & \text { if } \lambda_{t} \geq 3, \lambda_{t} \text { is overlined }; \\
(\lambda_{1}, \lambda_{2}, \cdots, \lambda_{t-1},\lambda_{t}-2  ; 2), & \text { if } \lambda_{t} \ge 4, \lambda_{t} \text { is non-overlined }; \\
(\lambda_{1}, \lambda_{2},  \cdots , \lambda_{t-1}, \overline{1} ; \overline{2}), & \text { if } \lambda_{t}=3; \\
(\lambda_{1}, \lambda_{2}, \cdots, \lambda_{t-1} ; 1,1), & \text { if } \lambda_{t}=2;\\
(\lambda_{1}, \lambda_{2}, \cdots, \lambda_{t-1} ; \lambda_{t}), & \text { if } \lambda_{t}=\overline{2};\\
(\lambda_{1}, \lambda_{2}, \cdots, \lambda_{t-2},\overline{\lambda_{t-1}-1} ; 1, \overline{1}), & \text { if } \lambda_{t} = \overline{1}, \lambda_{t-1} \text { is overlined }; \\
(\lambda_{1}, \lambda_{2}, \cdots, \lambda_{t-2},  \overline{1} ; 1,1), & \text { if } \lambda_{t}=\overline{1}, \lambda_{t-1}=2;\\
(\lambda_{1}, \lambda_{2}, \cdots, \lambda_{t-2},\lambda_{t-1}-1  ; 1, \overline{1}), & \text { if } \lambda_{t} =\overline{1}, \lambda_{t-1}\ge 3 \text{ and }\lambda_{t-1} \text { is non-overlined }. \end{cases}
\end{equation*}
Here, $g_{2}$ is clearly well-defined. Note that $\lambda_{t-1}\ge 2$ for the cases $\lambda_{t}=\overline{1},2,\overline{2}$, so the images for all the cases are mutually exclusive, it follows that $g_{2}$ is one-to-one.
Furthermore, it is clear that the pairs $\left(\cdots, \overline{2}, \overline{1} ; \cdots \right)$ are not in the images of $g_{2}$, but $\left(\cdots, \overline{2}, \overline{1} ; \cdots \right) \in \overline{P}\left(a \mid \text {no } 1^{\prime} \mathrm{s}\right) \oplus \overline{P}\left(2 \right)$ for $a\ge 3$. For $a=2$, it can be easily checked that the inequality holds. Hence, the result is as required.
\end{proof}
\begin{lemma}
If $a,b$ are integers with $a\ge b\ge 1$, then
\[\overline{p}\left(a \mid \textup {no } 1^{\prime} \mathrm{s}\right) \overline{p}(b) > \overline{p}\left(a+b \mid \textup {no } 1^{\prime} \mathrm{s}\right),\]
except for $(a,b)=(1,1),(2,1).$
\end{lemma}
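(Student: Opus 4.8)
The plan is to argue by induction on $b$, with the three preceding lemmas doing all the real work. The base cases are $b=1$ and $b=2$. The case $b=1$ is Lemma~\ref{le2}, which gives the inequality strictly for $a\ge 3$; a direct check of the small values shows that, for $b=1$, equality holds exactly at $(a,b)=(1,1),(2,1)$, which are the two asserted exceptions. The case $b=2$ is Lemma~\ref{le13}, valid strictly for all $a\ge b=2$. So fix $b\ge 3$, whence $a\ge b\ge 3$, and assume the strict inequality for every pair $(a,b')$ with $1\le b'<b$; since $a\ge 3$, none of these pairs is exceptional.

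Next, split $\overline{P}\left(a+b\mid\textup{no }1^{\prime}\mathrm{s}\right)$ according to whether a non-overlined part equal to $2$ occurs:
\[
\overline{p}\left(a+b\mid\textup{no }1^{\prime}\mathrm{s}\right)=\overline{p}\left(a+b\mid\textup{no }1^{\prime}\mathrm{s}\textup{ and no }2^{\prime}\mathrm{s}\right)+\overline{p}\left(a+b\mid\textup{no }1^{\prime}\mathrm{s},\ \textup{has a non-overlined }2\right).
\]
Since $a\ge b\ge 2$, Lemma~\ref{le11} bounds the first summand by $\overline{p}\left(a\mid\textup{no }1^{\prime}\mathrm{s}\right)\overline{p}\left(b\mid\textup{no }2^{\prime}\mathrm{s}\right)$, strictly. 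For the second summand, deleting one non-overlined $2$ is a well-defined injection (indeed a bijection) into $\overline{P}\left(a+b-2\mid\textup{no }1^{\prime}\mathrm{s}\right)$, so that summand is at most $\overline{p}\left(a+b-2\mid\textup{no }1^{\prime}\mathrm{s}\right)$; applying the conclusion for the pair $(a,b-2)$ --- the induction hypothesis when $b-2\ge 3$, Lemma~\ref{le13} when $b-2=2$, and Lemma~\ref{le2} (strict, as $a\ge 3$) when $b-2=1$ --- bounds it in turn by $\overline{p}\left(a\mid\textup{no }1^{\prime}\mathrm{s}\right)\overline{p}(b-2)$.

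Adding the two bounds, what remains is the elementary identity $\overline{p}\left(b\mid\textup{no }2^{\prime}\mathrm{s}\right)+\overline{p}(b-2)=\overline{p}(b)$, which comes from the complementary splitting of $\overline{P}(b)$ by the presence of a non-overlined $2$, together with the same ``delete a non-overlined $2$'' bijection $\overline{P}\left(b\mid\textup{has a non-overlined }2\right)\to\overline{P}(b-2)$. This yields $\overline{p}\left(a+b\mid\textup{no }1^{\prime}\mathrm{s}\right)<\overline{p}\left(a\mid\textup{no }1^{\prime}\mathrm{s}\right)\overline{p}(b)$, the strictness coming from Lemma~\ref{le11}. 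The main thing to watch is the bookkeeping: verifying that the ``delete a $2$'' maps genuinely land in the sets conditioned by ``no $1^{\prime}$s'' and respect the standard arrangement of overlined versus non-overlined equal parts, and that the descent $b\mapsto b-2$ never revisits an exceptional pair --- which it cannot, since $a\ge 3$ throughout the inductive range. I expect this routine verification, rather than any genuine obstacle, to be the only delicate point.
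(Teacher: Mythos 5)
Your proposal is correct and follows essentially the same route as the paper: the same decomposition of $\overline{P}\left(a+b\mid\textup{no }1^{\prime}\mathrm{s}\right)$ by the presence of a non-overlined $2$, the same ``delete a $2$'' bijection onto $\overline{P}\left(a+b-2\mid\textup{no }1^{\prime}\mathrm{s}\right)$, and the same appeals to Lemmas \ref{le11}, \ref{le2}, and \ref{le13}, with only the cosmetic difference that you induct on $b$ while the paper inducts on $a+b$.
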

\begin{proof}
For $b=1,2$, we have proved in Lemma \ref{le2} and Lemma \ref{le13}. So we assume that $b \ge 3$. Let $n=a+b$, we will apply induction on $n$.\par
Base case($a=b=3,n=6$),
\[\overline{p}\left(3 \mid \textup {no } 1^{\prime} \mathrm{s}\right)=4, \text{ } \overline{p}(3)=8, \text{ } \overline{p}\left(6 \mid \textup {no } 1^{\prime} \mathrm{s}\right)=16,\]
thus, \[\overline{p}\left(3 \mid \textup {no } 1^{\prime} \mathrm{s}\right) \overline{p}(3) > \overline{p}\left(6 \mid \textup {no } 1^{\prime} \mathrm{s}\right).\]\par
Inductive step: suppose the inequality is true for smaller sum $a+b$. Then by the inductive hypothesis and Lemma \ref{le11}, Lemma \ref{le2} and Lemma \ref{le13}, we have
\begin{align}
 \overline{p}\left(a+b \mid \text {no } 1^{\prime} \mathrm{s}\right)={} & \overline{p}\left(a+b \mid \text {no } 1^{\prime} \mathrm{s} \text { and at least one } 2^{\prime} \mathrm{s}\right)+ \overline{p}\left(a+b \mid \text {no } 1^{\prime} \mathrm{s} \text { and no } 2^{\prime} \mathrm{s}\right) \notag \\
={} & \overline{p}\left(a+b-2 \mid \text {no } 1^{\prime} \mathrm{s}\right)+\overline{p}\left(a+b \mid \text {no } 1^{\prime} \mathrm{s} \text { and no } 2^{\prime} \mathrm{s}\right) \notag \\
< {} & \overline{p}\left(a \mid \text {no } 1^{\prime} \mathrm{s}\right) \overline{p}(b-2)+\overline{p}\left(a+b \mid \text {no } 1^{\prime} \mathrm{s} \text { and no } 2^{\prime} \mathrm{s}\right) \notag \\
< {} &  \overline{p}\left(a \mid \text {no } 1^{\prime} \mathrm{s}\right) \overline{p}\left(b \mid \text {at least one } 2^{\prime} \mathrm{s}\right)+\overline{p}\left(a \mid \text {no } 1^{\prime} \mathrm{s}\right) \overline{p}\left(b \mid \text {no } 2^{\prime} \mathrm{s}\right) \notag \\
={} & \overline{p}\left(a \mid \text {no } 1^{\prime} s\right) \overline{p}(b). \notag
\end{align}
This completes the proof.
\end{proof}

Next, we are ready to prove Theorem \ref{th1}.
\begin{proof}[Proof of Theorem \ref{th1}]
We apply induction on $n=a+b$.\\
Base case: It can be verified that the inequality holds for $n=4(a=3,b=1$ or $a=2,b=2)$.\\
Inductive step: Assume that $n\ge 5$ and the inequality holds for $n-1$. Since $a\ge b$, we have $a\ge 3$. Thus, by inductive hypothesis,
\[\overline{p}(a+b-1)<\overline{p}(a-1)\overline{p}(b).\]
Then
\begin{align}
\overline{p}(a+b)= {} & \overline{p}\left(a+b \mid \text {at least one } 1^{\prime} \mathrm{s}\right)+\overline{p}\left(a+b \mid \text {no } 1^{\prime} \mathrm{s}\right) \notag \\
= {} & \overline{p}(a+b-1)+\overline{p}\left(a+b \mid \text {no } 1^{\prime} \mathrm{s}\right) \notag \\
< {} & \overline{p}(a-1) \overline{p}(b)+\overline{p}\left(a \mid \text {no } 1^{\prime} \mathrm{s}\right) \overline{p}(b) \notag \\
= {} & \overline{p}(a-1) \overline{p}(b)+\left\{\overline{p}(a)-\overline{p}\left(a \mid \text {at least one } 1^{\prime} \mathrm{s}\right)\right\} \overline{p}(b) \notag \\
= {} & \overline{p}(a-1) \overline{p}(b)+\left\{\overline{p}(a)-\overline{p}(a-1)\right\} \overline{p}(b) \notag \\
= {} & \overline{p}(a) \overline{p}(b). \notag
\end{align}
Therefore, by the principle of mathematical induction, the inequality holds for $n\ge 4$. And the exceptions can be individually checked. Thus, we complete the proof.
\end{proof}

\section{Proof of Theorem \ref{th2}}\label{sec3}
\begin{proof}[Proof of Theorem \ref{th2}]
By the definition, we have $\overline{P}_{0}(x):=1$ and
\[\overline{P}_{n}(x)=\frac{x}{n}\sum_{k=1}^{n} \overline{\sigma}(k) \overline{P}_{n-k}(x), \text{ for } n\ge 1,\]
then
\[ \sum_{n=1}^{\infty} \overline{P}_{n}(x) n q^{n-1}=\sum_{n=1}^{\infty} x \left(\sum_{k=1}^{n} \overline{\sigma}(k)  \overline{P}_{n-k}(x)\right)q^{n-1}.
\]
And the equality is equivalent to the following equality,
\begin{align}
\frac{\mathrm{d}}{\mathrm{d} q}\left(\sum_{n=0}^{\infty} \overline{P}_{n}(x) q^{n}\right)= {} & x\left(\sum_{n=0}^{\infty}\sum_{k=0}^{n} \overline{\sigma}(k+1)\overline{P}_{n-k}(x)q^{n}\right) \notag \\
= {} & \left(\sum_{n=0}^{\infty}\overline{P}_{n}(x) q^{n}\right)\left(x\sum_{n=1}^{\infty}\overline{\sigma}(n)q^{n-1}\right). \notag
\end{align}
Since $\overline{P}_{0}(x)=1$, it implies that
\begin{align}\label{ie3}
\sum_{n=0}^{\infty} \overline{P}_{n}(x) q^{n}=\exp \left(x \sum_{n=1}^{\infty} \overline{\sigma}(n) \frac{q^{n}}{n}\right).
\end{align}
On the other hand,
\begin{align}
\sum_{m=1}^{\infty} \ln(1+q^m)= {} & \sum_{m=1}^{\infty}\sum_{k=1}^{\infty}(-1)^{k-1}\frac{q^{mk}}{k} \notag \\
= {} & \sum_{n=1}^{\infty}\sum_{d \mid n}(-1)^{d-1}\frac{q^{n}}{d} \notag \\
= {} & \sum_{n=1}^{\infty}\frac{1}{n}\sum_{d \mid n}(-1)^{\frac{n}{d}-1}dq^{n} \notag \\
= {} & - \sum_{n=1}^{\infty}\frac{1}{n}\tau(n)q^{n}, \text{ where } \tau(n)=\sum_{d \mid n}(-1)^{\frac{n}{d}}d. \notag
\end{align}
Then
\[\prod_{n=1}^{\infty}\left(1+q^{n}\right)=\exp \left(-\sum_{n=1}^{\infty} \tau(n) \dfrac{q^{n}}{n}\right).\]
Similarly,
\[\prod_{n=1}^{\infty}\left(1-q^{n}\right)=\exp \left(-\sum_{n=1}^{\infty} \sigma(n) \dfrac{q^{n}}{n}\right).\]
Thus,
\[\prod_{n=1}^{\infty}\frac{1+q^n}{1-q^{n}}=\exp\left(\sum_{n=1}^{\infty} (\sigma(n)-\tau(n)) \dfrac{q^{n}}{n}\right).\]
Denoted $n$ by $n=2^ml$, where $l$ is odd. Then $\sigma(n)-\tau(n)=2^{m+1}\sigma(l)=\overline{\sigma}(n).$ \\
Therefore,
\begin{align}\label{ie4}
\sum_{n=0}^{\infty} \overline{P}_{n}(x)q^{n}=\exp \left(x \sum_{n=1}^{\infty} \overline{\sigma}(n) \frac{q^{n}}{n}\right)=(\prod_{n=1}^{\infty}\frac{1+q^n}{1-q^{n}})^x.
\end{align}
As for the expression of $\overline{P}_{n}^{\prime}(x),$ the proof is based on examining the logarithmic derivative of (\ref{ie3}), certain grouping of terms and using the uniqueness of the coefficients of the involved power series in $q$.\par
We differentiate both side of (\ref{ie3}) with respect to $x$, and make use of known property of Cauchy product, we can find that the formula(valid for $n\ge 1$)
\[\overline{P}_{n}^{\prime}(x)=\sum_{k=1}^{n} \frac{\overline{\sigma}(k)}{k} \overline{P}_{n-k}(x).\]
\end{proof}

\section{Proof of Theorem \ref{th3}}
\begin{proof}[Proof of Theorem \ref{th3}]
Let $\Delta_{n}(x):=\overline{P}_{n+1}(x)-\overline{P}_{n}(x)$ for $n\ge 1.$
We prove Theorem \ref{th3} by induction on $n$.\\
Claim: $\overline{P}_{n}(x)<\overline{P}_{n+1}(x)$ for all $n \ge 1$ and $x \ge 1.$ \par
Let $n=1$, we have $\Delta_{1}(x)=2x^2 >0$ for all $x >0.$ Suppose $n\ge 2$ and  $\Delta_{m}(x)>0$ is true for all real numbers $x\ge 1$ and integers $1\le m \le n-1.$\par
By inductive hypothesis, we have $\Delta_{m}(x)=\overline{P}_{m+1}(x)-\overline{P}_{m}(x)>0$ for $1\le m \le n-1$ and $x\ge 1.$ Note that $P_{1}(x)=2x> 1=P_{0}(x)$ for $x \ge 1$, we can obtain
\[\sum_{k=1}^{n} \dfrac{\overline{\sigma}(k)}{k} \overline{P}_{n+1-k}(x) > \sum_{k=1}^{n} \dfrac{\overline{\sigma}(k)}{k} \overline{P}_{n-k}(x).\]
Thus,
\begin{align}
\overline{P}_{n+1}^{\prime}(x)= {} & \sum_{k=1}^{n+1} \frac{\overline{\sigma}(k)}{k} \overline{P}_{n+1-k}(x)  \notag \\
= {} & \sum_{k=1}^{n} \frac{\overline{\sigma}(k)}{k} \overline{P}_{n+1-k}(x)+\frac{\overline{\sigma}(n+1)}{n+1}  \notag \\
> {} & \sum_{k=1}^{n} \frac{\overline{\sigma}(k)}{k} \overline{P}_{n-k}(x)=\overline{P}_{n}^{\prime}(x). \notag
\end{align}
That is, $\overline{P}_{n+1}^{\prime}(x)>\overline{P}_{n}^{\prime}(x)$ for $x\ge 1$.\\
Since $\overline{P}_{1}(x)=2x$, it follows that
\[2\le \overline{P}_{n}^{\prime}(x)<\overline{P}_{n+1}^{\prime}(x) \text{ for } x\ge 1.\]
It is well known that $\overline{p}(n)$ increases strictly, so we have
\[\overline{P}_{n+1}(1)=\overline{p}(n+1)>\overline{p}(n)=\overline{P}_{n}(1),\]
that is $\Delta_{n}(1)>0$. And $\Delta_{n}^{\prime}(x)=\overline{P}_{n+1}^{\prime}(x)-\overline{P}_{n}^{\prime}(x)>0$ for $x\ge 1$, which yield that $\Delta_{n}(x)\ge \Delta_{n}(1)>0,$ thus $\overline{P}_{n+1}(x)>\overline{P}_{n}(x)$.\par
Next, we show (\ref{iequality1}). Since polynomials $\overline{P}_{n}(x)$ have positive coefficients with degree $n$, then 
\[\lim _{x \rightarrow \infty} \Delta_{n}(x)=+\infty.\] 
And note that $\Delta_{n}(0)=0$. We only need to prove $\Delta_{n}^{\prime}(0)<0$ for $n+1=2^s,s>1.$ This is true that
\[\Delta_{n}^{\prime}(0)=\frac{\overline{\sigma}(n+1)}{n+1}-\frac{\overline{\sigma}(n)}{n},\]
and $\overline{\sigma}(n+1)=2(n+1), \overline{\sigma}(n)\ge 2n+2$.
\end{proof}
\begin{table}[h]
\begin{center}
    \caption{ Maximal positive real roots of $\overline{P}_{a, b}(x)$ }\label{tab1}
\setlength{\tabcolsep}{4mm}{
\begin{tabular}{ccccccccccc}
    \toprule
        $x_{a,b}$ & 1 & 2 & 3 & 4 & 5 & 6 & 7 & 8 & 9 & 10 \\
    \midrule
        1 & 1.00 & 1.00 & 0.80 & 0.81 & 0.78 & 0.74 & 0.72 & 0.72 & 0.70 & 0.69 \\
        2 & 1.00 & 0.84 & 0.70 & 0.70 & 0.65 & 0.61 & 0.60 & 0.59 & 0.57 & 0.56 \\
        3 & 0.80 & 0.70 & 0.57 & 0.56 & 0.51 & 0.48 & 0.47 & 0.46 & 0.44 & 0.43 \\
        4 & 0.81 & 0.70 & 0.56 & 0.54 & 0.51 & 0.47 & 0.46 & 0.45 & 0.43 & 0.42 \\
        5 & 0.78 & 0.65 & 0.51 & 0.51 & 0.47 & 0.43 & 0.42 & 0.41 & 0.39 & 0.39 \\
        6 & 0.74 & 0.61 & 0.48 & 0.47 & 0.43 & 0.40 & 0.39 & 0.38 & 0.36 & 0.35 \\
        7 & 0.72 & 0.60 & 0.47 & 0.46 & 0.42 & 0.39 & 0.38 & 0.37 & 0.35 & 0.34 \\
        8 & 0.72 & 0.59 & 0.46 & 0.45 & 0.41 & 0.38 & 0.37 & 0.36 & 0.34 & 0.33 \\
        9 & 0.70 & 0.57 & 0.44 & 0.43 & 0.39 & 0.36 & 0.35 & 0.34 & 0.32 & 0.31 \\
        10 & 0.69 & 0.56 & 0.43 & 0.42 & 0.39 & 0.35 & 0.34 & 0.33 & 0.31 & 0.30 \\
    \bottomrule
    \end{tabular}}
\end{center}
\end{table}

\section{Proof of Theorem \ref{th3}}
\noindent Let $a,b$ be positive integers, and $\overline{P}_{a, b}(x):=\overline{P}_{a}(x) \overline{P}_{b}(x)-\overline{P}_{a+b}(x)$.
Then
$\overline{P}_{a, b}(0)=0, \overline{P}_{a, b}^{\prime}(0)=-\dfrac{\overline{\sigma}{(a+b)}}{a+b},$ and
\[\lim _{x \rightarrow \infty} \overline{P}_{a, b}(x)=\infty.\]\par
We are especially interested in the largest non-negative real root $x_{a,b}$ of $\overline{P}_{a, b}(0)$, since 
$\overline{P}_{a}(x) \overline{P}_{b}(x)>\overline{P}_{a+b}(x)$ for $x>x_{a,b}$.
Table \ref{tab1} records these roots for $1\le a, b \le 10,$ and the roots distribution in Table \ref{tab1} explains exceptions in Theorem \ref{th1}.\par
Let $\mu:=\mu(n)=\pi\sqrt{n}$. Zukermann\cite{Zu} gave a formula for the overpartition function, which is indeed a Rademacher-type convergent series,
\begin{equation}\label{ie6}
\overline{p}(n)=\frac{1}{2 \pi} \sum_{\substack{k=1 \\ 2 \nmid k}}^{\infty} \sqrt{k} \sum_{\substack{h=0 \\(h, k)=1}}^{k-1} \frac{\omega(h, k)^{2}}{\omega(2 h, k)} e^{-\frac{2 \pi i n h}{k}} \frac{\mathrm{d}}{\mathrm{d} n}\left(\frac{\sinh \frac{\pi \sqrt{n}}{k}}{\sqrt{n}}\right),
\end{equation}
where
\[\omega(h, k)=\exp \left(\pi i \sum_{r=1}^{k-1} \frac{r}{k}\left(\frac{h r}{k}-\left\lfloor\frac{h r}{k}\right\rfloor-\frac{1}{2}\right)\right)\]
for positive integers $h$ and $k$. From this Rademacher-type series (\ref{ie6}), Engel\cite{En} provided an error term for the overpartition function
\[\overline{p}(n)=\frac{1}{2 \pi} \sum_{\substack{k=1 \\ 2 \nmid k}}^{N} \sqrt{k} \sum_{\substack{h=0 \\(h, k)=1}}^{k-1} \frac{\omega(h, k)^{2}}{\omega(2 h, k)} e^{-\frac{2 \pi i n h}{k}} \frac{\mathrm{d}}{\mathrm{d} n}\left(\frac{\sinh \frac{\mu}{k}}{\sqrt{n}}\right)+R_{2}(n, N),\]
where
\[\lvert R_{2}(n, 2)\rvert \leq \frac{2^{\frac{5}{2}}}{n \mu} \sinh \left(\frac{\mu}{2}\right).\]
Moreover, Engel\cite{En} proved that $\overline{p}(n)$ is logconcave for $n\ge 2$. For our purpose the case $N=2$, Liu and Zhang\cite{Liu} found that:
\begin{align}\label{ie7}
\frac{e^{\mu}}{8 n}\left(1-\frac{1}{\sqrt{n}}\right)<\overline{p}(n)<\frac{e^{\mu}}{8 n}\left(1+\frac{1}{n}\right) \text{ for } n\ge 1.
\end{align}
This is the effective estimate for $\overline{P}_{n}(1)$ that we will use in our proof.\\

To prove the theorem, we first establish a few lemmas, and we utilize the well-known upper bound:
\[\sigma{(m)}\le m(1+\ln(m)).\]
\begin{lemma}\label{le3}
Let $n\ge 1$ and $x\ge 1$, then
\[\overline{P}_{n}(x)-(1+\ln (2n))>0.\]
\end{lemma}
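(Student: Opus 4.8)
The plan is to prove Lemma \ref{le3} by induction on $n$, exploiting the recursion $\overline{P}_{n}(x)=\frac{x}{n}\sum_{k=1}^{n}\overline{\sigma}(k)\overline{P}_{n-k}(x)$ together with the monotonicity $\overline{P}_{0}(x)\le \overline{P}_{1}(x)\le\cdots$ and the lower bound $\overline{P}_{m}^{\prime}(x)\ge 2$ from Theorem \ref{th3}. First I would dispose of small cases directly: $\overline{P}_{1}(x)=2x\ge 2>1+\ln 2$ for $x\ge 1$, and one or two more base cases ($n=2,3$) can be checked against $1+\ln(2n)$ using the explicit formulas already displayed. Since for $x\ge 1$ we have $\overline{P}_{n}(x)\ge \overline{P}_{n}(1)=\overline{p}(n)$ (monotonicity in $x$, as $\overline{P}_{n}$ has nonnegative coefficients and $\overline{P}_{n}^{\prime}\ge 2$), it in fact suffices to prove $\overline{p}(n)>1+\ln(2n)$ for all $n\ge 1$; this reduces the statement to a clean one-variable inequality.

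For that reduced inequality I would use the effective estimate (\ref{ie7}), namely $\overline{p}(n)>\frac{e^{\mu}}{8n}\bigl(1-\frac{1}{\sqrt n}\bigr)$ with $\mu=\pi\sqrt n$. The right-hand side grows like $e^{\pi\sqrt n}/(8n)$, which dominates $1+\ln(2n)$ enormously, so the inequality $\frac{e^{\pi\sqrt n}}{8n}\bigl(1-\frac{1}{\sqrt n}\bigr)>1+\ln(2n)$ holds for all $n$ above a small threshold; one checks this threshold explicitly (say for $n\ge 4$ or so, after noting $1-1/\sqrt n\ge 1/2$ for $n\ge 4$, whence it is enough that $e^{\pi\sqrt n}>16n(1+\ln(2n))$, an inequality easily verified for $n\ge 4$ and then monotone thereafter). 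The remaining finitely many $n$ are checked by hand from the known small values of $\overline{p}(n)$ (e.g. $\overline{p}(1)=2,\overline{p}(2)=4,\overline{p}(3)=8$), all of which exceed $1+\ln(2n)$.

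Alternatively, if one prefers to stay purely combinatorial/inductive and avoid the analytic bound, I would run the induction via the recursion directly: from $\overline{P}_{n}(x)=\frac{x}{n}\sum_{k=1}^{n}\overline{\sigma}(k)\overline{P}_{n-k}(x)\ge \frac{1}{n}\sum_{k=1}^{n}\overline{\sigma}(k)\overline{P}_{n-k}(x)\ge \frac{\overline{P}_{n-1}(x)}{n}\sum_{k=1}^{n}\overline{\sigma}(k)$ (using monotonicity $\overline{P}_{n-k}(x)\ge$ nothing directly useful here — better to isolate the $k=1$ term), one gets $\overline{P}_{n}(x)\ge \frac{x}{n}\bigl(\overline{\sigma}(1)\overline{P}_{n-1}(x)+\overline{\sigma}(n)\bigr)=\frac{x}{n}\bigl(2\overline{P}_{n-1}(x)+\overline{\sigma}(n)\bigr)$, and since $x\ge 1$ and $\overline{\sigma}(n)\ge 2n$ one obtains $\overline{P}_{n}(x)\ge \frac{2}{n}\overline{P}_{n-1}(x)+2$. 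Combined with the inductive hypothesis $\overline{P}_{n-1}(x)>1+\ln(2n-2)$, for $n\ge 2$ this gives $\overline{P}_{n}(x)>\frac{2}{n}(1+\ln(2n-2))+2>1+\ln(2n)$, where the last inequality amounts to $1+\frac{2}{n}(1+\ln(2(n-1)))>\ln(2n)$, which is comfortably true for all $n\ge 2$ (and trivially so for large $n$ since the left side exceeds $1$ and we really only need $\ln 2 + \ln n < 1 + \text{something positive}$... in fact one should be slightly more careful and use $\overline{P}_{n}(x)\ge \overline{P}_{n-1}(x)+2$ from $\overline{P}_{n}^{\prime}\ge\overline{P}_{n-1}^{\prime}\ge 2$ and $\overline{P}_{n}(1)\ge\overline{P}_{n-1}(1)$, giving the even simpler telescoping bound $\overline{P}_{n}(x)\ge \overline{P}_{1}(x)+2(n-1)\ge 2n$, and $2n>1+\ln(2n)$ for all $n\ge 1$).

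The main obstacle is nothing deep — it is just choosing the cleanest of these routes and nailing down the exact finite range of $n$ that must be checked directly. I expect the telescoping argument $\overline{P}_{n}(x)\ge \overline{P}_{1}(x)+2(n-1)\ge 2n>1+\ln(2n)$ to be the shortest and most self-contained, since $2n>1+\ln(2n)$ for every $n\ge 1$ is immediate (the function $2n-1-\ln(2n)$ is positive at $n=1$ and has nonnegative derivative for $n\ge 1$), so I would present that as the proof and relegate the analytic estimate to a remark.
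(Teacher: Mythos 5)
Your preferred route (the telescoping bound) is correct in substance but genuinely different from the paper's proof, and more elementary. The paper makes the same initial reduction you do --- non-negative coefficients give $\overline{P}_{n}(x)\ge \overline{P}_{n}(1)=\overline{p}(n)$ for $x\ge 1$ --- but then invokes the Rademacher-type estimate (\ref{ie7}) together with $e^{\mu}>\mu^{4}/4!$ to get $\overline{p}(n)>\frac{\pi^{4}}{256}n$ for $n\ge 16$, compares this with $1+\ln(2n)\le \ln 32+\frac{n}{16}$, and checks $1\le n\le 15$ separately; your first (analytic) route is essentially this same argument with a different explicit threshold. The telescoping version replaces all of that by $\overline{P}_{n}(x)\ge 2n>1+\ln(2n)$, which is shorter and avoids (\ref{ie7}) entirely. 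One step does need patching: you justify $\overline{P}_{n}(x)\ge \overline{P}_{n-1}(x)+2$ by citing $\overline{P}_{n}^{\prime}\ge\overline{P}_{n-1}^{\prime}\ge 2$ and $\overline{P}_{n}(1)\ge\overline{P}_{n-1}(1)$, but the second fact only gives a gap of $0$ at $x=1$ and the first only says the gap is non-decreasing on $[1,\infty)$, so together they yield $\overline{P}_{n}(x)\ge\overline{P}_{n-1}(x)$ rather than the $+2$. To recover the $+2$ you must note that $\overline{p}(n)-\overline{p}(n-1)=\overline{p}(n\mid \textup{no }1^{\prime}\mathrm{s})\ge 2$ for $n\ge 2$ (the overpartitions $(n)$ and $(\overline{n})$ already account for two); alternatively, strict integer-valued monotonicity alone gives $\overline{p}(n)\ge n+1$, and $n+1>1+\ln(2n)$ already suffices, which is an even cleaner way to finish. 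With that one line supplied, your argument is a valid and more self-contained proof than the one in the paper.
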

\begin{proof}
Since $\overline{P}_{n}(x)$ has non-negative coefficients, it is sufficient to show that
\[\overline{P}_{n}(1) > (1+\ln (2 n)) \text{ for } n\ge 1.\]
Suppose $n\ge 16,$ by (\ref{ie7}), we have
\begin{align}
\overline{P}_{n}(1)>\frac{1}{8 n}\left(1-\frac{1}{\sqrt{n}}\right) \mathrm{e}^{\pi \sqrt{n}}>\frac{\left(\pi \sqrt{n}\right)^{4}}{4 ! 8 n}\left(1-\frac{1}{\sqrt{n}}\right)
>\frac{\pi^4}{256}n.\notag
\end{align}
Since
\[1+\ln(2n)=1+\ln32+\ln(\frac{n}{16})\le \ln32+ \frac{n}{16},\]
for
\[n\ge 16> \frac{\ln(32)}{\dfrac{\pi^4}{256}-\dfrac{1}{16}},\]
we have $\overline{P}_{n}(1)>\ln32+ \dfrac{n}{16} \ge 1+\ln(2n).$
It remains to check $\overline{P}_{n}(1)>1+\ln(2n)$ for $1\le n\le 15.$
For the cases $n=1,2$, it can be individual checked. For the case $3\le n\le 15$, we have $1+\ln(2n)<5$. Since $\overline{P}_{n}(1)$ increases monotonously in $n$ by Theorem \ref{th3}, we also have $1+\ln(2n)<8=\overline{P}_{3}(1)\le \overline{P}_{n}(1)$ for $3\le n\le 15$.
\end{proof}
\begin{lemma}\label{le4}
Let $a$ be a positive integer, if there exists real number $x_{0}\ge 1$ such that $\overline{P}_{a+b}(x_{0})>(1+\ln (2 a)) \overline{P}_{b}(x_{0})$ for all integers $1\le b < a$, then
\[\overline{P}_{a+b}(x)>(1+\ln (2 a)) \overline{P}_{b}(x), \text{ for } x\ge x_{0} \text{ and } 1\le b < a.\]
\end{lemma}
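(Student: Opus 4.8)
The plan is to fix the integer $a$ and prove the stronger statement that the function
\[
F_{b}(x):=\overline{P}_{a+b}(x)-(1+\ln(2a))\,\overline{P}_{b}(x)
\]
is strictly increasing on $[1,\infty)$ for every $1\le b<a$. Since the hypothesis supplies $F_{b}(x_{0})>0$, monotonicity then gives $F_{b}(x)\ge F_{b}(x_{0})>0$ for all $x\ge x_{0}$, which is exactly the assertion.

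To analyse $F_{b}'(x)$ I would invoke the derivative formula of Theorem \ref{th2}, namely $\overline{P}_{n}'(x)=\sum_{k=1}^{n}\frac{\overline{\sigma}(k)}{k}\overline{P}_{n-k}(x)$, and write
\[
F_{b}'(x)=\sum_{k=1}^{a+b}\frac{\overline{\sigma}(k)}{k}\overline{P}_{a+b-k}(x)-(1+\ln(2a))\sum_{k=1}^{b}\frac{\overline{\sigma}(k)}{k}\overline{P}_{b-k}(x).
\]
Splitting the first sum at $k=b$ and pairing the two sums over $1\le k\le b$, and using $\overline{P}_{0}\equiv 1$ together with $a+b-k=a+(b-k)$, this rearranges to
\[
F_{b}'(x)=\sum_{k=1}^{b-1}\frac{\overline{\sigma}(k)}{k}F_{b-k}(x)+\frac{\overline{\sigma}(b)}{b}\bigl(\overline{P}_{a}(x)-(1+\ln(2a))\bigr)+\sum_{k=b+1}^{a+b}\frac{\overline{\sigma}(k)}{k}\overline{P}_{a+b-k}(x).
\]
In the last sum every term is nonnegative because the polynomials $\overline{P}_{m}$ have nonnegative coefficients; the middle term is strictly positive by Lemma \ref{le3} applied with $n=a$; and the first sum is a nonnegative linear combination of $F_{1},\dots,F_{b-1}$, whose indices all lie in $\{1,\dots,a-1\}$.

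Accordingly I would run a strong induction on $b$. For $b=1$ the first sum is empty, so $F_{1}'(x)>0$ on $[1,\infty)$ directly from Lemma \ref{le3}; hence $F_{1}$ is increasing and $F_{1}(x)\ge F_{1}(x_{0})>0$ for $x\ge x_{0}$. Assuming $F_{1},\dots,F_{b-1}$ are already known to be positive on $[x_{0},\infty)$, the displayed identity shows $F_{b}'(x)>0$ there, whence $F_{b}(x)\ge F_{b}(x_{0})>0$ for $x\ge x_{0}$. The only substantive ingredient is Lemma \ref{le3}; everything else is index bookkeeping, and the step I expect to need the most care is the correct pairing of the two defining sums — in particular isolating the $k=b$ term, which produces the crucial $\overline{P}_{a}(x)-(1+\ln(2a))$ contribution, and verifying that the terms with $k>b$ contribute nothing negative.
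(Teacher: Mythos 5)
Your proposal is correct; the identity you derive for $F_b'(x)$ is exactly the $k=1$ case of the decomposition the paper uses, and the two essential inputs (the derivative formula from Theorem \ref{th2} and Lemma \ref{le3} for the isolated term $\overline{P}_a(x)-(1+\ln(2a))$ coming from $k=b$) are the same. Where you diverge is in how the argument is closed: the paper fixes $b$, computes \emph{all} derivatives $f^{(k)}$ for $1\le k\le b$, checks each is positive at $x_0$ (using the hypothesis for the smaller indices $m-l$ and Lemma \ref{le3} for the constant-paired term), and then cascades monotonicity downward from $f^{(b)}$, which increases on all of $[0,\infty)$. You instead differentiate only once and run a strong induction on $b$, feeding the already-established positivity of $F_1,\dots,F_{b-1}$ on $[x_0,\infty)$ into the first sum of your identity. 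Your version is somewhat cleaner bookkeeping — it avoids describing the general shape of $f^{(k)}$ with its coefficients $c_{k,l}$ and remainder $g_k$ — at the cost of needing positivity of the lower-index differences on the whole ray rather than just at $x_0$, which your induction supplies. One cosmetic point: your opening claim that $F_b$ is increasing on all of $[1,\infty)$ is only justified by your induction for $b=1$; for $b\ge 2$ you actually prove increase on $[x_0,\infty)$, which is all that is needed and is what your inductive step correctly asserts.
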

\begin{proof}
Let $f(x)=\overline{P}_{a+b}(x)-(1+\ln (2 a)) \overline{P}_{b}(x)$. Since
\[\overline{P}_{n}^{\prime}(x)=\sum_{k=1}^{n} \frac{\overline{\sigma}(k)}{k} \overline{P}_{n-k}(x),\]
then the $k$-th derivative of $f(x)$ can be denoted by
\[f^{(k)}(x)=g_{k}(x)+\sum_{l=1}^{m} c_{k, l}(\overline{P}_{a+m-l}(x)-(1+\ln (2 a)) \overline{P}_{m-l}(x)),\]
where $m=b-k+1, 1\le k\le b, c_{k, l}>0$, and $g_{k}(x)$ is a linear combination of $\overline{P}_{n}(x)(0\le n \le a+b-k)$ with non-negative coefficients. Since $\overline{P}_{n}(x)$ has non-negative coefficients, then by hypothesis and Lemma \ref{le3}, we have $f^{(k)}(x_{0})>0$ for $1\le k \le b$. \par
Note that $f^{(b)}(x_{0})>0$, and we can also find that $f^{(b)}(x)$ increases for $x\ge 0$, then we have $f^{(b)}(x)>0$ for $x\ge x_{0}$, it implies that $f^{(b-1)}(x)$ increases for $x\ge x_{0}$. Since $f^{(b-1)}(x_{0})>0$, then $f^{(b-2)}(x)$ increases for $x\ge x_{0}$, continuing in this way, we obtain $f(x)$ increases for $x\ge x_{0}$. Since $f(x_{0})>0$, then $f(x)>0$ for $x\ge x_{0}$. That is,
\[\overline{P}_{a+b}(x)>(1+\ln (2 a)) \overline{P}_{b}(x), \text{ for } x\ge x_{0} \text{ and } 1\le b < a.\]
\end{proof}
\begin{proof}[Proof of Theorem \ref{th4}]
The case $x=1$ is proved in Theorem \ref{th1}. So we assume that $x>1$, and we show (\ref{ie2}) by induction on $n=a+b.$\\
For $n=a+b=2,$ we have $\overline{P}_{1}(x) \overline{P}_{1}(x)-\overline{P}_{2}(x)=2x(x-1) >0$ for $x>1 $.\\
Suppose $n\ge 3$ and $\overline{P}_{A}(x) \overline{P}_{B}(x)>\overline{P}_{A+B}(x)$ for $2 \leq A+B \leq n-1=a+b-1$ and $x> 1$.\\
Without loss of generality, we assume that $a\ge b$, then $a\ge 2$.\\
By Theorem \ref{th1}, we have
\[\overline{P}_{a}(1) \overline{P}_{b}(1) \ge \overline{P}_{a+b}(1), \text{ for } a+b\ge 2.\]
If we can show that $\dfrac{\mathrm{d}}{\mathrm{d} x}(\overline{P}_{a}(x) \overline{P}_{b}(x))>\overline{P}_{a+b}^{\prime}(x)$ for $x>1$,  which implies $\overline{P}_{a}(x) \overline{P}_{b}(x)>\overline{P}_{a+b}(x)$ for $x> 1$, the proof is completed.
Note that $\overline{P}_{A}(x) \overline{P}_{0}(x)=\overline{P}_{A}(x).$\\
Thus,
\begin{align*}
     & \overline{P}_{a}^{\prime}(x) \overline{P}_{b}(x)+\overline{P}_{a}(x) \overline{P}_{b}^{\prime}(x)-\overline{P}_{a+b}^{\prime}(x)  \notag \\
= {} & \sum_{k=1}^{a} \frac{\overline{\sigma}(k)}{k} \overline{P}_{a-k}(x) \overline{P}_{b}(x)+\overline{P}_{a}(x) \sum_{k=1}^{b} \frac{\overline{\sigma}(k)}{k} \overline{P}_{b-k}(x)-\sum_{k=1}^{a+b} \frac{\overline{\sigma}(k)}{k} \overline{P}_{a+b-k}(x) \notag \\
> {} & \sum_{k=1}^{a} \frac{\overline{\sigma}(k)}{k} \overline{P}_{a+b-k}(x)+\sum_{k=1}^{b} \frac{\overline{\sigma}(k)}{k} \overline{P}_{a+b-k}(x)-\sum_{k=1}^{a+b} \frac{\overline{\sigma}(k)}{k} \overline{P}_{a+b-k}(x) \notag \\
= {} & \sum_{k=1}^{b} \frac{\overline{\sigma}(k)}{k} \overline{P}_{a+b-k}(x)- \sum_{k=a+1}^{a+b}\frac{\overline{\sigma}(k)}{k} \overline{P}_{a+b-k}(x) \notag \\
= {} & \sum_{k=1}^{b} (\frac{\overline{\sigma}(k)}{k} \overline{P}_{a+b-k}(x)-\frac{\overline{\sigma}(k+a)}{k+a} \overline{P}_{b-k}(x)) \notag \\
\ge {} & \sum_{k=1}^{b} (\frac{2k}{k} \overline{P}_{a+b-k}(x)-\frac{2\sigma(k+a)}{k+a} \overline{P}_{b-k}(x)) \notag \\
\ge {} & 2\sum_{k=1}^{b} (\overline{P}_{a+b-k}(x)-(1+\ln (2 a)) \overline{P}_{b-k}(x)). \notag
\end{align*}
Let us now consider
\begin{align}\label{ie8}
\overline{P}_{a+b-k}(x)-(1+\ln (2 a)) \overline{P}_{b-k}(x)
\end{align}
for each $k$ separately.
We only need to show $\overline{P}_{a+b-k}(1)-(1+\ln (2 a)) \overline{P}_{b-k}(1)>0$.\\
Since
\[\frac{e^{\mu}}{8 n}\left(1-\frac{1}{\sqrt{n}}\right)<\overline{P}_{n}(1)<\frac{e^{\mu}}{8 n}\left(1+\frac{1}{n}\right) \text{ for } n\ge 1,\]
for $k<b$,
\begin{align}
     & \overline{P}_{a+b-k}(1)-(1+\ln (2 a)) \overline{P}_{b-k}(1)  \notag \\
> {} & \frac{1}{8(a+b-k)}\left(1-\frac{1}{\sqrt{a+b-k}}\right) \mathrm{e}^{\pi\sqrt{a+b-k}}\notag \\
  {} & -(1+\ln (2 a)) \frac{1}{8(b-k)}\left(1+\frac{1}{b-k}\right) \mathrm{e}^{\pi\sqrt{b-k}}\label{ie9},
\end{align}
and the last is positive if and only if
\begin{align}\label{ie10}
\mathrm{e}^{\pi(\sqrt{a+b-k}-\sqrt{b-k})}>(1+\ln (2 a)) \frac{a+b-k}{b-k} \frac{1+\frac{1}{b-k}}{1-\frac{1}{\sqrt{a+b-k}}}.
\end{align}
Since
\begin{align*}
     & \sqrt{a+b-k}-\sqrt{b-k}  \notag \\
= {} & \frac{a}{\sqrt{a+b-k}+\sqrt{b-k}}  \notag \\
\ge {} & \frac{a}{\sqrt{2 a-1}+\sqrt{a-1}}> \frac{a}{2 \sqrt{a}+\sqrt{a}}=\frac{\sqrt{a}}{3}, \notag
\end{align*}
on the other hand,
\[(1+\ln (2 a)) \frac{a+b-k}{b-k} \frac{1+\frac{1}{b-k}}{1-\frac{1}{\sqrt{a+b-k}}}< (1+\ln (2 a))(1+a) \frac{2}{1-\frac{1}{\sqrt{a}}}.\]
If $a$ satisfies
\begin{equation}\label{ie11}
\mathrm{e}^{\pi \sqrt{a} / 3}>(1+\ln (2 a))(1+a) \frac{2}{1-\frac{1}{\sqrt{a}}},
\end{equation}
then $a$ also fulfill (\ref{ie10}). Heim and Neuhauser\cite{Hel} showed that
\[\mathrm{e}^{\pi \sqrt{a} / 3}>(1+\ln (2 a))(1+a) \frac{2}{1-\frac{1}{\sqrt{a}}}\]
holds for $a\ge94.$
This means that (\ref{ie8}) is positive for $x= 1.$ Then Lemma \ref{le4} implies that (\ref{ie8}) is positive for $x\ge 1.$\par
We used JavaScript to check that (\ref{ie8}) with $x=1$ is positive for $1\le k <b\le a \le 93$. Again Lemma \ref{le4} implies that (\ref{ie8}) is positive for $x\ge 1$.
What remains to be considered is (\ref{ie8}) is positive for $k=b.$ This follows from Lemma \ref{le4} setting $n=a.$
Thus, for all $a\ge b\ge k \ge 1,$ the value of (\ref{ie8}) is positive. Hence,
\[\overline{P}_{a}^{\prime}(x) \overline{P}_{b}(x)+\overline{P}_{a}(x) \overline{P}_{b}^{\prime}(x)>\overline{P}_{a+b}^{\prime}(x), \text{ for } x>1.\]
This completes the induction step.
\end{proof}
\section{Combinatorial Proof of Theorem \ref{th5}}
Actually, Theorem \ref{th5} is a special case of Theorem \ref{th4}. And the proof of Theorem \ref{th4} is analytic and inductive. In this section we give a combinatorial proof of Theorem \ref{th5} that is largely motivated by the work of Alanazi, Gagola, and Munagi as well as Chern, Fu, and Tang. By the combinatorial proof, we can see the explicit one-to-one correspondence between the two sides of the inequality for $k$-colored overpartition function.\par
Firstly, we introduce some notations that will be used later. Following Andrews and Eriksson \cite{Eri}, we denote the number of $k$-colored overpartitions of $n$ that satisfy a given condition by $\overline{p}_{-k}\left(n \mid \text {condition }\right)$, while the enumerated set will be denoted by $\overline{P}_{-k}\left(n \mid \text {condition }\right).$ \par
And for $k$-colored overpartition, we use the subscript $1,2,...,k$ to indicate the color of a specific part, while the superscript refers to multiplicities, for instance, $3_{5}^2$ stands for two parts of size $3$ with color $5$. We arrange the parts in a unique way such that both of their sizes and colors are weakly decreasing and the part which is overlined is no greater than the same numerical part which is non-overlined. For instance, $(4_{3}, 4_{2}, \overline{4}_{2}, 2_{3}^2, \overline{2}_{3}, 2_{1}, \overline{2}_{1})$ is a $3$-colored overpartition of $22$ written in the standard way. To avoid heavy notation and clarify possible confusion, we point out that the meaning of $\lambda_{i}$ is the $i$th part of a partition $\lambda$, not a part of size $\lambda$ and color $i$.\\
To prove the theorem, we need a few lemmas as follows.
\begin{lemma}\label{le5}
If $a,b,k$ are positive integers with $a\ge b$ and $k\ge 2$, then
\begin{equation}\label{ie12}
\overline{p}_{-k}\left(a \mid \textup {no } 1_{1}^{\prime} \mathrm{s}\right) \overline{p}_{-k}\left(b \mid \textup {no } 1_{2}^{\prime} \mathrm{s}\right) > \overline{p}_{-k}\left(a+b \mid \textup {no } 1_{1}^{\prime} \mathrm{s} \textup { and no } 1_{2}^{\prime} \mathrm{s}\right).
\end{equation}
\end{lemma}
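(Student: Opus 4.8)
The plan is to prove (\ref{ie12}) combinatorially, imitating the proof of Theorem~\ref{th1}: I will construct a map
\[
f:\overline{P}_{-k}\left(a+b\mid \textup{no } 1_{1}^{\prime}\mathrm{s} \textup{ and no } 1_{2}^{\prime}\mathrm{s}\right)\longrightarrow \overline{P}_{-k}\left(a\mid \textup{no } 1_{1}^{\prime}\mathrm{s}\right)\oplus\overline{P}_{-k}\left(b\mid \textup{no } 1_{2}^{\prime}\mathrm{s}\right)
\]
that is one-to-one but not onto, so that comparing cardinalities gives the strict inequality. Following the proof of Lemma~\ref{le11}, for $\lambda=(\lambda_{1},\dots,\lambda_{t})$ in the source I take the cut index $i=\max\{j:\lambda_{j}+\dots+\lambda_{t}\ge b\}$ and split the size of $\lambda_{i}$ as $x+y$ with $x+\lambda_{i+1}+\dots+\lambda_{t}=b$ and $y+\lambda_{1}+\dots+\lambda_{i-1}=a$, so $x\ge1$ and $0\le y<\mathrm{size}(\lambda_{i})$. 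The image of $f$ will always have $b$-component $\lambda_{i+1},\dots,\lambda_{t}$ together with $x$ copies of the part $1_{1}$, and $a$-component obtained from $\lambda_{1},\dots,\lambda_{i-1}$ by adjoining a piece of total size $y$. Appending copies of $1_{1}$ is harmless on the $b$-side because the condition forbidding $1_{2}$ is insensitive to parts of colour $1$, and it is reversible because $\lambda$, hence its tail, has no non-overlined $1_{1}$; in particular, whether or not the $b$-component contains a non-overlined $1_{1}$ detects the case $y=0$ and recovers $x$.

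The piece adjoined on the $a$-side is defined by cases parallel to Lemma~\ref{le11}, now also recording the colour $c$ of $\lambda_{i}$: nothing if $y=0$; the overlined part $\overline{y}_{c}$ if $\lambda_{i}$ is overlined (permissible even for $y=1$, as an overlined $\overline{1}_{c}$ is allowed); the non-overlined part $y_{c}$ if $\lambda_{i}$ is non-overlined and $y\ge2$; and, in the branch $y=1$ with $\lambda_{i}$ non-overlined, the non-overlined part $1_{c}$ if $c\ge2$ (legitimate, since only $1_{1}$ is forbidden, and unambiguous because when $y\ge1$ every pre-existing head part has size $\ge\mathrm{size}(\lambda_{i})\ge2$), while if $c=1$ I break $\lambda_{i-1}$ into the two nearly equal parts $\lceil\frac{m+1}{2}\rceil_{c'}$, $\lfloor\frac{m+1}{2}\rfloor_{c'}$ in the colour $c'$ of $\lambda_{i-1}$, exactly as in Lemma~\ref{le11}. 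I then check that $f$ is well defined (after disposing of the trivial case $a=b=1$ we may assume $a\ge2$, which forces $\lambda_{i-1}$ to exist when $y=1$, and $\mathrm{size}(\lambda_{i})=x+1\ge2$ keeps the adjoined parts different from $1_{1}$ in all but one configuration), that the several families of images are pairwise disjoint — one recovers $i$ from the number of appended copies of $1_{1}$, hence $\mathrm{size}(\lambda_{i})$, and reads off $y$, the colour $c$, and the overline status of $\lambda_{i}$ from the adjoined piece — so $f$ is injective, and that $f$ is not onto by exhibiting an explicit pair outside its image, the analogue of the pair $(\dots,\overline{2},\overline{1};\cdots)$ from Lemma~\ref{le11}.

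The one point with no counterpart in Lemma~\ref{le11}, and what I expect to be the main obstacle, is the residual configuration: because the source condition forbids only parts of size $1$, the part $\lambda_{i}$ may have size $2$, so in the branch $y=1$, $\lambda_{i}$ non-overlined of colour $1$, one is forced to $\lambda_{i}=2_{1}$, and then $\lambda_{i-1}$ can also equal the non-overlined part $2_{1}$, in which case the naive splitting of $\lambda_{i-1}$ would manufacture the forbidden part $1_{1}$. Here I would instead replace $\lambda_{i-1}=2_{1}$ together with the leftover unit by the pair $\overline{2}_{1},1_{2}$, exploiting $k\ge2$ to park the book-keeping in colour $2$; the bulk of the remaining work is to verify that this image lies in $\overline{P}_{-k}\left(a\mid \textup{no } 1_{1}^{\prime}\mathrm{s}\right)$ (one needs $\lambda_{i-2}\ne\overline{2}_{1}$, which follows from the standard ordering of parts) and is disjoint from the images of the overlined branch and of the split branches. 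Granting this flexible case analysis, (\ref{ie12}) follows by counting.
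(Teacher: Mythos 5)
Your overall architecture is the paper's: the cut index $i$, the split of $\lambda_i$ into $x+y$, and an injection into $\overline{P}_{-k}\left(a \mid \textup{no } 1_{1}^{\prime}\mathrm{s}\right)\oplus\overline{P}_{-k}\left(b \mid \textup{no } 1_{2}^{\prime}\mathrm{s}\right)$ that misses an explicit pair; the branches $y=0$, ``$\lambda_i$ overlined'', ``$y\ge 2$'', and ``$y=1$, $c(\lambda_i)\ne 1$'' agree with the paper's map $f_k$. The gap is in the branch $y=1$, $\lambda_i$ non-overlined of colour $1$, where you transplant the ceiling/floor splitting of $\lambda_{i-1}$ from Lemma~\ref{le11}. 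That device relies on $\lambda_{i-1}\ge 3$, which holds in Lemma~\ref{le11} because the source forbids all parts of size $1$ and $2$; here the source forbids only $1_1$ and $1_2$, so $\lambda_{i-1}$ may have size $2$. You patched the sub-case $\lambda_{i-1}=2_1$ (where the split would create the forbidden $1_1$), but not the sub-case $\lambda_{i-1}=2_{c'}$ with $c'\ge 2$, where the split produces the perfectly legal pair $2_{c'},1_{c'}$ and destroys injectivity by colliding with your ``adjoin $1_{c'}$'' branch. Concretely, take $k=2$, $a=5$, $b=1$, and $\lambda=(2_2,2_2,2_1)$, $\mu=(2_2,2_2,2_2)$, both in $\overline{P}_{-2}\left(6 \mid \textup{no } 1_{1}^{\prime}\mathrm{s} \textup{ and no } 1_{2}^{\prime}\mathrm{s}\right)$ with $i=3$ and $x=y=1$: your map sends $\lambda$ (colour-$1$ split branch, splitting $\lambda_2=2_2$ into $2_2,1_2$) and $\mu$ (adjoin $1_2$, since $c(\mu_3)=2$) to the same pair $\left((2_2,2_2,1_2);(1_1)\right)$. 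So the map as described is not one-to-one, and the cardinality comparison fails.

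The paper avoids this by using a different repair in exactly this branch: instead of splitting $\lambda_{i-1}$ into near-halves, it replaces $\lambda_{i-1}$ by $(\lambda_{i-1}-1)_{c(\lambda_{i-1})}$ (overlined or not, matching $\lambda_{i-1}$) together with \emph{two} copies of $1_2$, with the separate patch $1_2^{2},\overline{1}_1$ when $\lambda_{i-1}=2_1$. The doubled part $1_2^{2}$ is an unambiguous marker: the source contains no non-overlined $1_2$ at all, and every other branch adjoins at most one part of size one to the $a$-component, so no cross-branch collision can occur. Your proof needs a marker of this kind (or an additional case distinction when $\lambda_{i-1}$ has size $2$) before the injectivity claim is sound; once that branch is repaired, the rest of your argument, including the non-surjectivity witness $\left(\cdots,\overline{1}_2,\overline{1}_1;\cdots\right)$ and the direct check of $(a,b)=(1,1)$, goes through as in the paper.
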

\begin{proof}
Firstly, consider the cases $(a,b,k)=(1,1,k)$, some direct computations yield that
\[ \overline{p}_{-k}\left(1 \mid \text {no } 1_{1}^{\prime} \mathrm{s}\right) \overline{p}_{-k}\left(1 \mid \text {no } 1_{2}^{\prime} \mathrm{s}\right)=(2k-1)^2,\]
\[ \overline{p}_{-k}\left(2 \mid \text {no } 1_{1}^{\prime} \mathrm{s} \text { and no } 1_{2}^{\prime} \mathrm{s}\right)=2k^2-2k+1,\]
so (\ref{ie12}) holds for $k \ge 2$.
Next, we assume $a\ge 2$. For $\lambda=\left(\lambda_{1}, \lambda_{2}, \cdots, \lambda_{t}\right) \in \overline{P}_{-k}(a+b),$ let
\[ i=i(\lambda)=\max \left\{j \in \mathbb{N} \mid 1 \leq j \leq t, \lambda_{j}+\cdots+\lambda_{t} \geq b\right\}. \]
Moreover, let $\lambda_{i}=x+y(x=x(\lambda), y=y(\lambda))$ such that
\[ x+\lambda_{i+1}+\cdots+\lambda_{t}=b \quad \text { and } \quad y+\lambda_{1}+\cdots+\lambda_{i-1}=a. \]
Note that $0<x \leq \lambda_{i}.$ Now we define a map
 \[f_{k}: \overline{P}_{-k}\left(a+b \mid \text {no } 1_{1}^{\prime} \mathrm{s} \text { and no } 1_{2}^{\prime} \mathrm{s}\right) \rightarrow \overline{P}_{-k}\left(a \mid \text {no } 1_{1}^{\prime} \mathrm{s}\right) \oplus \overline{P}_{-k}\left(b \mid \text {no } 1_{2}^{\prime} \mathrm{s}\right) \]
as follows. For $\lambda=\left(\lambda_{1}, \lambda_{2}, \cdots, \lambda_{t}\right) \in \overline{P}_{-k}\left(a+b \mid \text {no } 1_{1}^{\prime} \mathrm{s} \text { and no } 1_{2}^{\prime} \mathrm{s}\right),$
\begin{equation*}
f_{k}(\lambda):= \begin{cases}(\lambda_{1}, \lambda_{2}, \cdots, \lambda_{i-1} ; \lambda_{i}, \cdots, \lambda_{t}), & \text { if } y=0 ; \\
 (\lambda_{1}, \lambda_{2}, \cdots, \lambda_{i-1}, \overline{y}_{c\left(\lambda_{i}\right)}; \lambda_{i+1}, \cdots, \lambda_{t}, 1_{1}^{x}), & \text { if } y \geq 1, \lambda_{i} \text { is overlined }; \\
(\lambda_{1}, \lambda_{2}, \cdots, \lambda_{i-1}, y_{c\left(\lambda_{i}\right)} ; \lambda_{i+1}, \cdots, \lambda_{t}, 1_{1}^{x}), & \text { if } y \geq 2, \lambda_{i} \text { is non-overlined } ; \\
(\lambda_{1}, \lambda_{2}, \cdots, \lambda_{i-1}, 1_{c\left(\lambda_{i}\right)} ; \lambda_{i+1}, \cdots, \lambda_{t}, 1_{1}^{x}), & \text { if } y=1, c(\lambda_{i}) \neq 1, \lambda_{i} \text { is non-overlined } ; \\
(\lambda_{1}, \lambda_{2}, \cdots, \lambda_{i-2}, 1_{2}^{2}, \overline{1}_{1} ;  \\
   \lambda_{i+1}, \cdots, \lambda_{t}, 1_{1}^{x}), & \text { if } y=1, c\left(\lambda_{i}\right)=1, \lambda_{i-1}=2_{1},\\ &  \text { and } \lambda_{i} \text { is non-overlined } ;\\
 (\lambda_{1}, \lambda_{2}, \cdots, \lambda_{i-2}, (\lambda_{i-1}-1)_{c\left(\lambda_{i-1}\right)}, 1_{2}^{2} ;  \\
   \lambda_{i+1}, \cdots, \lambda_{t}, 1_{1}^{x}), & \text { if } y=1, c\left(\lambda_{i}\right)=1, \lambda_{i-1}\neq 2_{1},\\ &  \text { and } \lambda_{i}, \lambda_{i-1} \text { are non-overlined } ;\\
(\lambda_{1}, \lambda_{2}, \cdots, \lambda_{i-2}, (\overline{\lambda_{i-1}-1})_{c\left(\lambda_{i-1}\right)}, 1_{2}^{2} ;  \\
 \lambda_{i+1}, \cdots, \lambda_{t}, 1_{1}^{x}), & \text { if } y=1, c\left(\lambda_{i}\right)=1,  \lambda_{i}  \text { is non-overlined }, \\ &  \text { and } \lambda_{i-1}  \text { is overlined }.
 \end{cases}
\end{equation*}\par
Here, $c\left(\lambda_{i}\right)$ denotes the color of part $\lambda_{i}$. For the sake of convenience, in the cases $y\neq 0$ and $\lambda_{t}=\overline{1}_{1}$, we denote the partitions $(\lambda_{i+1}, \cdots, \lambda_{t-1}, 1_{1}^{x}, \lambda_{t})$ of $b$ by $(\lambda_{i+1}, \cdots, \lambda_{t-1}, \lambda_{t}, 1_{1}^{x})$ in the images of $f_{k}$, and 
if $y=1, c\left(\lambda_{i}\right)=1, \lambda_{i-1}=\overline{2}_{2}$ and $\lambda_{i}$ is non-overlined, we denote the partitions $(\lambda_{1}, \lambda_{2}, \cdots, \lambda_{i-2}, 1_{2}^{2}, \overline{1}_{2})$ of $a$ by $(\lambda_{1}, \lambda_{2}, \cdots, \lambda_{i-2}, \overline{1}_{2},1_{2}^{2})$. Clearly, $f_{k}$ is well-defined and since $a\ge 2$, and $y+\lambda_{1}+\cdots+\lambda_{i-1}=a$, then $\lambda_{i-1}$ exists when $y=1$. Furthermore, since we arrange the parts of each partition in a unique way and $x>0$, we have $\lambda_{i-1}\ge \lambda_{i}=x+y\ge 2$ for the case $y=1$, which implies $\lambda_{i-1} \neq \overline{2}_{1},$ so the images of the fifth and seventh cases are disjoint. And since $b\ge 1$, then $\lambda_{i-1}\neq \overline{1}_{1}$ for the case $y=0$, it follows that the pair $\left(\cdots, \overline{1}_{2}, \overline{1}_{1} ;\cdots \right)$ is not in the images of $f_{k}$, but $\left(\cdots, \overline{1}_{2}, \overline{1}_{1} ; \cdots \right) \in \overline{P}_{-k}\left(a \mid \text {no } 1_{1}^{\prime} \mathrm{s}\right) \oplus \overline{P}_{-k}\left(b \mid \text {no } 1_{2}^{\prime} \mathrm{s}\right)$. Therefore, $f_{k}$ is injective but not onto, and we are done.
\end{proof}
\begin{lemma}\label{le6}
If $a$ is a positive integer and $k \ge 2$, then
\begin{equation}\label{ie13}
\overline{p}_{-k}\left(a \mid \textup {no } 1_{1}^{\prime} \mathrm{s}\right) \overline{p}_{-k}(1)> \overline{p}_{-k}\left(a+1 \mid \textup {no } 1_{1}^{\prime} \mathrm{s}\right).
\end{equation}
\end{lemma}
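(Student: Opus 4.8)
The plan is to adapt the combinatorial argument of Lemma~\ref{le2} to the $k$-colored setting. Since here the second factor is the full count $\overline{p}_{-k}(1)=2k$ — the admissible single parts being $1_{c}$ and $\overline{1}_{c}$ for $1\le c\le k$ — and not a restricted one, this adaptation is actually a little simpler than the one carried out in Lemma~\ref{le5}. First I would build an injective, non-surjective map
\[
g_{1,k}: \overline{P}_{-k}\left(a+1 \mid \textup{no } 1_{1}^{\prime} \mathrm{s}\right)\rightarrow \overline{P}_{-k}\left(a \mid \textup{no } 1_{1}^{\prime} \mathrm{s}\right)\oplus \overline{P}_{-k}(1)
\]
that peels one unit off the smallest part $\lambda_{t}$ of $\lambda=(\lambda_{1},\dots,\lambda_{t})$ and stores it, with the correct overline and color, in the second coordinate. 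Writing $c=c(\lambda_{t})$ for the color of $\lambda_{t}$, the cases are: $\lambda_{t}=\overline{i}_{c}$ with $i\ge2$ goes to $(\lambda_{1},\dots,\lambda_{t-1},(\overline{i-1})_{c}\,;\,1_{1})$; $\lambda_{t}=i_{c}$ non-overlined with $i\ge3$ goes to $(\lambda_{1},\dots,\lambda_{t-1},(i-1)_{c}\,;\,1_{1})$; $\lambda_{t}=2_{c}$ non-overlined with $c\ge2$ goes to $(\lambda_{1},\dots,\lambda_{t-1},1_{c}\,;\,1_{1})$; $\lambda_{t}=2_{1}$ goes to $(\lambda_{1},\dots,\lambda_{t-1},\overline{1}_{1}\,;\,\overline{1}_{1})$; $\lambda_{t}=\overline{1}_{c}$ goes to $(\lambda_{1},\dots,\lambda_{t-1}\,;\,\overline{1}_{c})$; and $\lambda_{t}=1_{c}$ non-overlined with $c\ge2$ goes to $(\lambda_{1},\dots,\lambda_{t-1}\,;\,1_{c})$. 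Compared with Lemma~\ref{le2}, the last two cases, in which $\lambda_{t}$ has size $1$ but color $\ge2$, are the genuinely new branches.

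For well-definedness I would check that in each case the first coordinate sums to $a$, is an overpartition written in standard form, and contains no part $1_{1}$: indeed $\lambda$ itself has none, and every newly created size-$1$ part is either overlined ($\overline{1}_{1}$ or $\overline{1}_{c}$) or has color $\ge2$. For injectivity I would recover $\lambda$ from its image by inspecting the second coordinate $\beta$ together with the last part of the first coordinate: $\beta=1_{1}$ comes from one of the first three cases, distinguished by whether that last part is overlined, non-overlined of size $\ge2$, or non-overlined of size $1$; $\beta=\overline{1}_{1}$ comes from the fourth or fifth case, distinguished by whether that last part equals $\overline{1}_{1}$, which it cannot in the fifth case, since there $\lambda_{t}=\overline{1}_{1}$ is the unique overlined $1$ of color $1$ and the hypothesis forbids $\lambda_{t-1}=1_{1}$; $\beta=\overline{1}_{c}$ with $c\ge2$ forces the fifth case, and $\beta=1_{c}$ with $c\ge2$ forces the sixth. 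In every branch the preimage is then uniquely determined, so $g_{1,k}$ is one-to-one.

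It remains to produce, for every $a\ge1$, a pair missed by $g_{1,k}$. Since a second coordinate equal to $\overline{1}_{2}$ can arise only through the fifth case, a pair $(\alpha\,;\,\overline{1}_{2})$ lies in the image exactly when $\alpha$ does not already contain the part $\overline{1}_{2}$; taking $\alpha\in\overline{P}_{-k}\left(a\mid \textup{no } 1_{1}^{\prime}\mathrm{s}\right)$ that does contain $\overline{1}_{2}$ — possible for all $a\ge1$ because $k\ge2$, for instance $\alpha=(\overline{1}_{2})$ when $a=1$ and $\alpha=((a-1)_{2},\overline{1}_{2})$ when $a\ge2$ — gives a pair in the codomain but not in the image, whence $g_{1,k}$ is one-to-one but not onto and inequality~(\ref{ie13}) follows. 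The main obstacle is the injectivity verification: the color subscripts create more branches than in Lemma~\ref{le2}, and one must make sure the six images are pairwise disjoint and that each branch admits a well-defined inverse. By comparison the failure of surjectivity is easy, since the colors $c\ge2$ furnish exactly the extra slack that is absent from the uncolored inequality.
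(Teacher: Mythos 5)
Your proof is correct and follows the same overall strategy as the paper: an explicit injection from $\overline{P}_{-k}(a+1\mid\textup{no }1_1'\mathrm{s})$ into the Cartesian product that peels one unit off the smallest part, followed by a witness for non-surjectivity. The one genuine divergence is the branch $\lambda_t=2_1$: you send it to $(\ldots,\overline{1}_1\,;\,\overline{1}_1)$, directly generalizing the map $g_1$ of Lemma~\ref{le2}, whereas the paper's $g_k$ keeps the second coordinate equal to $1_1$ and instead borrows from $\lambda_{t-1}$ (replacing it by $(\lambda_{t-1}-1)_{c(\lambda_{t-1})}$ and inserting $1_2^2$, with sub-cases according to whether $\lambda_{t-1}$ is overlined or equals $2_1$). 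Your choice buys a uniform argument valid for all $a\ge 1$ and a shorter case analysis, while the paper must treat $a=1$ by a separate direct computation because its $2_1$-branch needs $\lambda_{t-1}$ to exist; correspondingly your missed pair is $(\alpha\,;\,\overline{1}_2)$ with $\overline{1}_2\in\alpha$ rather than the paper's $(\ldots,\overline{1}_2,\overline{1}_1\,;\,1_c)$. Your injectivity discussion correctly isolates the only delicate collision, namely between the $2_1$-branch and the $\overline{1}_1$-branch, and resolves it by the uniqueness of the overlined part $\overline{1}_1$.
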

\begin{proof}
Firstly, for $a=1$, we compute directly
\[ \overline{p}_{-k}\left(1 \mid \text {no } 1_{1}^{\prime} \mathrm{s}\right) \overline{p}_{-k}(1)=4k^2-2k,\]
\[ \overline{p}_{-k}\left(2 \mid \text {no } 1_{1}^{\prime} \mathrm{s}\right)=2k+k-1+k-2+ \cdots +1+(k-1)k+\binom{k}{2}=2k^2,\]
and clearly (\ref{ie13}) holds for $k\ge 2$. Now we assume $a\ge2.$
Given a partition $\lambda=\left(\lambda_{1}, \lambda_{2}, \cdots, \lambda_{t}\right) \in \overline{P}_{-k}\left(a+1 \mid \text {no } 1_{1}^{\prime} s\right),$ then $\lambda_{j}\neq 1_{1}(1\le j \le t)$. \par
Define a map
\[ g_{k}: \overline{P}_{-k}\left(a+1 \mid \text {no } 1_{1}^{\prime} \mathrm{s}\right) \rightarrow \overline{P}_{-k}\left(a \mid \text {no } 1_{1}^{\prime} \mathrm{s}\right) \oplus \overline{P}_{-k}(1) \]
by
\begin{equation*}
g_{k}(\lambda): = \begin{cases} (\lambda_{1}, \lambda_{2}, \cdots, \lambda_{t-1}, \left(\overline{\lambda_{t}-1}\right)_{c\left(\lambda_{t}\right)} ; 1_{1}), & \text { if } \lambda_{t} \geq 2, \lambda_{t} \text { is overlined }; \\
(\lambda_{1}, \lambda_{2}, \cdots, \lambda_{t-1}, \left(\lambda_{t}-1\right)_{c\left(\lambda_{t}\right)} ; 1_{1}), & \text { if } \lambda_{t} \geq 2, \lambda_{t}\neq 2_{1}, \lambda_{t} \text { is  non-overlined }; \\
(\lambda_{1}, \lambda_{2}, \cdots, \lambda_{t-2}, (\overline{\lambda_{t-1}-1})_{c\left(\lambda_{t-1}\right)}, 1_{2}^{2} ; 1_{1}), & \text { if } \lambda_{t}=2_{1},  \lambda_{t-1} \text { is  overlined };\\
(\lambda_{1}, \lambda_{2}, \cdots, \lambda_{t-2}, (\lambda_{t-1}-1)_{c\left(\lambda_{t-1}\right)}, 1_{2}^{2} ; 1_{1}), & \text { if } \lambda_{t}=2_{1}, \lambda_{t-1}\neq 2_{1},\\
& \text{ and } \lambda_{t-1} \text { is  non-overlined };\\
(\lambda_{1}, \lambda_{2}, \cdots, \lambda_{t-2}, 1_{2}^{2}, \overline{1}_{1} ; 1_{1}), & \text { if } \lambda_{t}=2_{1}, \lambda_{t-1}= 2_{1};\\
(\lambda_{1}, \lambda_{2}, \cdots, \lambda_{t-1}; \lambda_{t}), & \text { if } \lambda_{t}=1.
\end{cases}
\end{equation*}\par
Here, in the case of $\lambda_{t}=2_{1}, \lambda_{t-1}=\overline{2}_{2}$, we denote the partitions $(\lambda_{1}, \lambda_{2}, \cdots, \lambda_{t-2}, 1_{2}^{2}, \overline{1}_2)$ of $a$ as $(\lambda_{1}, \lambda_{2}, \cdots, \lambda_{t-2}, \overline{1}_2, 1_{2}^{2})$, and the last case $\lambda_{t}=1$ means that the size of $\lambda_{t}$ is 1, except for $\lambda_{t}=1_{1}$. Clearly, $g_{k}$ is well-defined. And if the size of  $\lambda_{t}$ is $2$, since $a\ge 2$, then $\lambda_{t-1}$ does not vanish and $\lambda_{t-1}\ge 2$. Furthermore, by the uniqueness of arrangement for parts, we have $\lambda_{t-1} \neq \overline{2}_{1}$ when
$\lambda_{t}=2_{1}$, so the images of any two cases are disjoint. And the pairs $(\cdots, \overline{1}_{2}, \overline{1}_{1} ; 1_{c})$ and $(\cdots, \overline{1}_{2}, \overline{1}_{1} ; \overline{1}_{c}) \in \overline{P}_{-k}\left(a \mid \text {no } 1_{1}^{\prime} \mathrm{s}\right) \oplus \overline{P}_{-k}(1)$, where $1\le c\le k$, but they are not the images of $g_{k}$. Hence, $g_{k}$ is one-to-one but not onto, which implies that the result follows.
\end{proof}
\begin{lemma}\label{le7}
If $a,b,k$ are positive integers with $a\ge b\ge 1$ and $k \ge 2$, then
\[\overline{p}_{-k}\left(a \mid \textup {no } 1_{1}^{\prime} \mathrm{s}\right) \overline{p}_{-k}(b)> \overline{p}_{-k}\left(a+b \mid \textup {no } 1_{1}^{\prime} \mathrm{s}\right).\]
\end{lemma}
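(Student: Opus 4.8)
The plan is to run a combinatorial induction that mirrors the one used in Section~\ref{sec2} for the uncolored inequality, except that the recursion now peels off a part equal to $1_2$ instead of a part of size $2$; this option is available precisely because the condition ``no $1_1^{\prime}\mathrm{s}$'' still permits parts equal to $1_2$. I would induct on $b$. The base case $b=1$ is exactly Lemma~\ref{le6}, which already supplies the strict inequality $\overline{p}_{-k}(a\mid\textup{no }1_1^{\prime}\mathrm{s})\,\overline{p}_{-k}(1)>\overline{p}_{-k}(a+1\mid\textup{no }1_1^{\prime}\mathrm{s})$ for all $k\ge 2$ and $a\ge 1$, so nothing further is needed there.

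For the inductive step, fix $a\ge b\ge 2$ and assume the assertion for every admissible pair (i.e.\ with first coordinate $\ge$ second coordinate $\ge 1$) whose second coordinate is strictly smaller than $b$. Split $\overline{P}_{-k}(a+b\mid\textup{no }1_1^{\prime}\mathrm{s})$ according to whether a non-overlined part $1_2$ occurs:
\begin{align*}
\overline{p}_{-k}(a+b\mid\textup{no }1_1^{\prime}\mathrm{s})={}&\overline{p}_{-k}(a+b\mid\textup{no }1_1^{\prime}\mathrm{s}\textup{ and at least one }1_2^{\prime}\mathrm{s})\\
&{}+\overline{p}_{-k}(a+b\mid\textup{no }1_1^{\prime}\mathrm{s}\textup{ and no }1_2^{\prime}\mathrm{s}).
\end{align*}
Deleting one non-overlined $1_2$ is a bijection from $\overline{P}_{-k}(m\mid\textup{at least one }1_2^{\prime}\mathrm{s})$ onto $\overline{P}_{-k}(m-1)$ that leaves the ``no $1_1$'' restriction (and any $\overline{1}_2$) untouched, so the first summand equals $\overline{p}_{-k}(a+b-1\mid\textup{no }1_1^{\prime}\mathrm{s})$; the same bijection, applied without the ``no $1_1$'' restriction, gives $\overline{p}_{-k}(b)=\overline{p}_{-k}(b-1)+\overline{p}_{-k}(b\mid\textup{no }1_2^{\prime}\mathrm{s})$. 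The inductive hypothesis applied to $(a,b-1)$, legitimate because $a\ge b>b-1\ge 1$, bounds the first summand above by $\overline{p}_{-k}(a\mid\textup{no }1_1^{\prime}\mathrm{s})\,\overline{p}_{-k}(b-1)$, while Lemma~\ref{le5} bounds the second above by $\overline{p}_{-k}(a\mid\textup{no }1_1^{\prime}\mathrm{s})\,\overline{p}_{-k}(b\mid\textup{no }1_2^{\prime}\mathrm{s})$. Adding the two estimates, factoring out $\overline{p}_{-k}(a\mid\textup{no }1_1^{\prime}\mathrm{s})$, and collapsing the bracket via the identity for $\overline{p}_{-k}(b)$ yields $\overline{p}_{-k}(a+b\mid\textup{no }1_1^{\prime}\mathrm{s})<\overline{p}_{-k}(a\mid\textup{no }1_1^{\prime}\mathrm{s})\,\overline{p}_{-k}(b)$, which closes the induction.

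I expect the only delicate point to be verifying that ``delete a non-overlined $1_2$'' is genuinely a bijection at the level of $k$-colored overpartitions: one must check that removing such a part never forces a change in which copy of any part is overlined (in particular a possible $\overline{1}_2$ survives), never affects the ``no $1_1$'' condition, and that the inverse operation of appending a $1_2$ always returns an element of the prescribed set written in the standard form. Once this routine bookkeeping is recorded, the remainder is purely formal counting; in particular, unlike the proof of Theorem~\ref{th4}, no analytic estimate such as the bound~$(\ref{ie7})$ enters here.
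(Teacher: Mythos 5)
Your proposal is correct and follows essentially the same route as the paper: the identical decomposition of $\overline{P}_{-k}(a+b\mid\textup{no }1_1^{\prime}\mathrm{s})$ by the presence of a non-overlined $1_2$, with Lemma~\ref{le6} as the base and Lemmas~\ref{le5} and the inductive hypothesis supplying the two bounds. The only (immaterial) differences are that you induct on $b$ rather than on $a+b$ and therefore dispense with the paper's explicit check at $a=b=2$, which your scheme indeed renders unnecessary.
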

\begin{proof}
For $b=1$, it has been proved in Lemma \ref{le6}. So we assume that $b \ge 2$. Let $n=a+b$, we will apply induction on $n$.\par
Base case($a=b=2,n=4$),
\[\overline{p}_{-k}\left(2 \mid \text {no } 1_{1}^{\prime} \mathrm{s}\right) \overline{p}_{-k}(2)=2k^2(2k^2+2k),\]
\[\overline{p}_{-k}\left(4 \mid \text {no } 1_{1}^{\prime} \mathrm{s}\right)=\frac{2k^4+8k^3+10k^2-2k}{3},\]
and
\begin{align}
\overline{p}_{-k}\left(2 \mid \text {no } 1_{1}^{\prime} \mathrm{s}\right) \overline{p}_{-k}(2)-\overline{p}_{-k}\left(4 \mid \text {no } 1_{1}^{\prime} \mathrm{s}\right)={} & \frac{10k^4+4k^3-10k^2+2k}{3}>0,\text { for } k\ge 2. \notag
\end{align}\par
Inductive step: suppose the inequality is true for smaller sum $a+b$. Then by the inductive hypothesis and Lemma \ref{le5} and Lemma \ref{le6}, we have
\begin{align}
 &\overline{p}_{-k}\left(a+b \mid \text {no } 1_{1}^{\prime} \mathrm{s}\right)\notag \\
 &= \overline{p}_{-k}\left(a+b \mid \text {no } 1_{1}^{\prime} \mathrm{s} \text { and at least one } 1_{2}^{\prime} \mathrm{s}\right)+ \overline{p}_{-k}\left(a+b \mid \text {no } 1_{1}^{\prime} \mathrm{s} \text { and no } 1_{2}^{\prime} \mathrm{s}\right) \notag \\
&= \overline{p}_{-k}\left(a+b-1 \mid \text {no } 1_{1}^{\prime} \mathrm{s}\right)+\overline{p}_{-k}\left(a+b \mid \text {no } 1_{1}^{\prime} \mathrm{s} \text { and no } 1_{2}^{\prime} \mathrm{s}\right) \notag \\
&< \overline{p}_{-k}\left(a \mid \text {no } 1_{1}^{\prime} \mathrm{s}\right) \overline{p}_{-k}(b-1)+\overline{p}_{-k}\left(a+b \mid \text {no } 1_{1}^{\prime} \mathrm{s} \text { and no } 1_{2}^{\prime} \mathrm{s}\right) \notag \\
&< \overline{p}_{-k}\left(a \mid \text {no } 1_{1}^{\prime} \mathrm{s}\right) \overline{p}_{-k}\left(b \mid \text {at least one } 1_{2}^{\prime} \mathrm{s}\right)+\overline{p}_{-k}\left(a \mid \text {no } 1_{1}^{\prime} \mathrm{s}\right) \overline{p}_{-k}\left(b \mid \text {no } 1_{2}^{\prime} \mathrm{s}\right) \notag \\
&=\overline{p}_{-k}\left(a \mid \text {no } 1_{1}^{\prime} s\right) \overline{p}_{-k}(b). \notag
\end{align}
This completes the proof.
\end{proof}

Now, we are ready to prove Theorem \ref{th5} by induction.
\begin{proof}[Proof of Theorem \ref{th5}]
Let $n=a+b$. We apply induction on $n$.\par
Base case: It can be verified that the inequality holds for $n=2(a=b=1)$ for any $k\ge 2$.\par
Inductive step: Assume that $n\ge 3$ and the inequality holds for $n-1$. Without loss of generality, suppose $a\ge b$, so $a\ge 2$. Thus, by inductive hypothesis,
\[\overline{p}_{-k}(a+b-1)<\overline{p}_{-k}(a-1) \overline{p}_{-k}(b).\]
According to Lemma \ref{le7}, we have
\[\overline{p}_{-k}\left(a \mid \text {no } 1_{1}^{\prime} \mathrm{s}\right) \overline{p}_{-k}(b)> \overline{p}_{-k}\left(a+b \mid \text {no } 1_{1}^{\prime} \mathrm{s}\right),\]
then
\begin{align}
\overline{p}_{-k}(a+b)= {} & \overline{p}_{-k}\left(a+b \mid \text {at least one } 1_{1}^{\prime} s\right)+\overline{p}_{-k}\left(a+b \mid \text {no } 1_{1}^{\prime} s\right) \notag \\
= {} & \overline{p}_{-k}(a+b-1)+\overline{p}_{-k}\left(a+b \mid \text {no } 1_{1}^{\prime} \mathrm{s}\right) \notag \\
< {} & \overline{p}_{-k}(a-1) \overline{p}_{-k}(b)+\overline{p}_{-k}\left(a \mid \text {no } 1_{1}^{\prime} \mathrm{s}\right) \overline{p}_{-k}(b) \notag \\
= {} & \overline{p}_{-k}(a-1) \overline{p}_{-k}(b)+\left\{\overline{p}_{-k}(a)-\overline{p}_{-k}\left(a \mid \text {at least one } 1_{1}^{\prime} \mathrm{s}\right)\right\} \overline{p}_{-k}(b) \notag \\
= {} & \overline{p}_{-k}(a-1) \overline{p}_{-k}(b)+\left\{\overline{p}_{-k}(a)-\overline{p}_{-k}(a-1)\right\} \overline{p}_{-k}(b) \notag \\
= {} & \overline{p}_{-k}(a) \overline{p}_{-k}(b). \notag
\end{align}
Therefore, by the principle of mathematical induction, the inequality holds for $n\ge 2$.
\end{proof}


\begin{thebibliography}{99}
\bibitem{Ala} Alanazi A A, Gagola S M, Munagi A O. Combinatorial proof of a partition inequality of Bessenrodt-Ono[J]. Annals of Combinatorics, 2017, 21(3): 331-337.

\bibitem{And} Andrews G E. The theory of partitions[M]. Cambridge university press, 1998.

\bibitem{Eri} Andrews G E, Eriksson K. Integer partitions[M]. Cambridge University Press, 2004.

\bibitem{Che} Chern S, Fu S, Tang D. Some inequalities for k-colored partition functions[J]. The Ramanujan Journal, 2018, 46(3): 713-725.

\bibitem{Co} Corteel S, Lovejoy J. Overpartitions[J]. Transactions of the American Mathematical Society, 2004, 356(4): 1623-1635.

\bibitem{En} Engel B. Log-concavity of the overpartition function[J]. The Ramanujan Journal, 2017, 43(2): 229-241.

\bibitem{Han} Han G N. The Nekrasov-Okounkov hook length formula: refinement, elementary proof, extension and applications[C]//Annales de l'Institut Fourier. 2010, 60(1): 1-29.

\bibitem{Neu} Heim B, Neuhauser M. The Dedekind eta function and D'Arcais-type polynomials[J]. Research in the Mathematical Sciences, 2020, 7(1): 1-8.

\bibitem{Hel} Heim B, Neuhauser M, Troger R. Polynomization of the Bessenrodt-Ono inequality[J]. Annals of Combinatorics, 2020, 24(4): 697-709.

\bibitem{Liu} Liu E Y S, Zhang H W J. Inequalities for the overpartition function[J]. The Ramanujan Journal, 2021, 54(3): 485-509.

\bibitem{Ne} Newman M. An identity for the coefficients of certain modular forms[J]. Journal of the London Mathematical Society, 1955, 1(4): 488-493.

\bibitem{Zu} Zuckerman H S. On the coefficients of certain modular forms belonging to subgroups of the modular group[J]. Transactions of the American Mathematical Society, 1939, 45(2): 298-321.

\end{thebibliography}
\end{document}